\newtheorem{theorem}{Theorem}[section]
\newtheorem{corollary}[theorem]{Corollary}
\newtheorem{notation}[theorem]{Notation}
\newtheorem{lemma}[theorem]{Lemma}
\newtheorem{proposition}[theorem]{Proposition}
\theoremstyle{remark}
\newtheorem{remark}[theorem]{Remark}
\numberwithin{equation}{section}
\begin{document}

\title{Flat connections and cohomology invariants}

\author[I. Biswas]{Indranil Biswas}

\address{School of Mathematics, Tata Institute of Fundamental
Research, Homi Bhabha Road, Bombay 400005, India}

\email{indranil@math.tifr.res.in}

\author[M. Castrill\'{o}n L\'{o}pez]{Marco Castrill\'{o}n L\'{o}pez}

\address{ICMAT(CSIC-UAM-UC3M-UCM), Dept. Geometr\'{\i}a y
Topolog\'{\i}a, Facultad de Ciencias Matem\'{a}ticas, Universidad
Complutense de Madrid, 28040, Madrid, Spain}

\email{mcastri@mat.ucm.es}

\subjclass[2010]{53C05, 55R40, 51H25}

\keywords{Principal bundle, flat connection, characteristic form, cohomology
invariants.}

\date{}

\begin{abstract}
The main goal of this article is to construct some
geometric invariants for the topology of the set
$\mathcal{F}$ of flat connections on a principal $G$-bundle $P\,\longrightarrow\,
M$. Although the characteristic classes of principal bundles are
trivial when $\mathcal{F}\neq \emptyset$, their classical
Chern-Weil construction can still be exploited to define a
homomorphism from the set of homology classes of maps $S\,\longrightarrow\,
\mathcal{F}$ to the cohomology group $H^{2r-k}(M,\,\mathbb{R})$, where
$S$ is null-cobordant $(k-1)$-manifold, once a $G$-invariant
polynomial $p$ of degree $r$ on $\text{Lie}(G)$ is fixed. For $S\,=\,S^{k-1}$, this gives a
homomorphism $\pi _{k-1}(\mathcal{F})\,\longrightarrow\, H^{2r-k}(M,\, \mathbb{R})$. The
map is shown to be globally gauge invariant and furthermore it
descends to the moduli space of flat connections
$\mathcal{F}/\mathrm{Gau}P$, modulo cohomology with integer coefficients. The
construction is also adapted to complex manifolds. In this case,
one works with the set $\mathcal{F}^{0,2}$ of connections with
vanishing $(0,2)$-part of the curvature, and the Dolbeault
cohomology. Some examples and applications are presented.
\end{abstract}

\maketitle

\section{Introduction}

The classical Chern-Weil construction of the characteristic
classes of a principal $G$-bundle $\pi\,:\, P \,\longrightarrow \,
M$ on a manifold $M$ of dimension $n$ can be built in terms of the
bundle of connections $C\,\longrightarrow \, M$ associated to
$\pi$, that is, the affine bundle over $M$ whose sections over any open
subset $U\subset M$ are the connection on the restriction
$P\vert_U$. The bundle $\pi^* P \,\longrightarrow \, C$ is endowed
with a canonical connection so that the evaluation of its
curvature $\Omega$ by the $G$-invariant polynomials of
$\mathfrak{g}\,=\,\text{Lie}(G)$ provides a sort of universal
characteristic form $p(\Omega )$ inducing the characteristic
classes under the isomorphism $H^{\bullet}(M)\,\simeq\,
H^{\bullet}(C)$. One of the advantages of this approach relies on
the fact that the dimension $(1+\dim G)n$ of $C$ is bigger than the dimension
$n$ of $M$, so that polynomials $p$ of degree $r$ bigger than
$n/2$ still provide forms of potential relevance. For example, in
\cite{F}, these polynomials are used to define a form of degree
$2r-n$ on the space of all connections $\mathcal{A}$. In this
article, we construct forms $\beta ^p _k$ of degree $2r-k$, for
any $0\leq k\leq r$.

The topological information provided by characteristic classes is
trivial if the bundle $\pi$ admits flat connections. However,
there are many interesting instances of these special bundles,
both in physics and in mathematics: bundles modelling quantum
phases as that of the Aharonov-Bohm effect, topological Field
Theories, moduli spaces of Yang-Mills solution on Riemann
surfaces, stability of vector bundles over algebraic varieties,
etcetera. Interestingly, we show that the above mentioned forms
$\beta ^p _k$ provide topological information by integrating along
submanifolds contained in the subset
$\mathcal{F}\,\subset\,\mathcal{A}$ of flat connections when
$\mathcal{F}\,\neq\, \emptyset$. More precisely, we construct a map
$$\mathrm{Map}(S,\,\mathcal{F})\,\longrightarrow\,
H^{2r-k}(M,\,\mathbb{R})$$ for any manifold $S$ of dimension $k-1$
which is trivially cobordant. Furthermore, it is proved that the
above map behaves well with respect to homology relation in
$\mathrm{Map}(S,\,\mathcal{F})$ and also it is invariant under the
gauge group $\mathrm{Gau}P$ acting on the left on
$\mathrm{Map}(S,\,\mathcal{F})$. These results are connected with
previous articles in the literature (see \cite{Gu} and \cite{Gu2})
where similar constructions were obtained, but from a different
approach and dealing with homology chains in $\mathcal{F}$ and for
$S=S^1$ respectively. The case of spheres $S=S^{k-1}$ is of
special interest as it induces a homomorphism defined on higher homotopy
groups $\pi _{k-1} (\mathcal{F})\,\longrightarrow \,
H^{2r-k}(M,\,\mathbb{R})$.

The invariance under gauge transformation is not enough to
guarantee a well defined object in the moduli space of flat
connections $\mathcal{F}/\mathrm{Gau}P$. In fact, the moduli space
involves actions of $\Phi _s \in \mathrm{Gau}P$ in $\mathcal{F}$
depending on $s\in S$ itself, a situation for which the invariance
above does not hold true. Still, we prove that the our main
construction descends to a map defined on the set
\[
\{\Pi\circ f \,\in\, \mathrm{Map}(S, \,\mathcal{F}/\mathrm{Gau}P)
\,\mid\, f\,\in\, \mathrm{Map}(S,\,\mathcal{F})\}\, ,
\]
where $\Pi: \mathcal{A}\longrightarrow\mathcal{A}/\mathrm{Gau}P$, and taking values in
$H^{2r-k}(M,\,\mathbb{R})$ modulo entire cohomology $H^{2r-k}(M,\,
\mathbb{Z})$. This construction is consistent with particular cases previously
given in the literature (see \cite{Gu2} and \cite{Sal}).

We also extend the study of these geometric invariants to complex
bundles and manifolds. In particular, suitable adaptations of the
main objects allow a definition of a map taking values in the
Dolbeault cohomology $H^{0,2r-k}(M)$, from homotopy classes of
maps in the space of connections with vanishing $(0,2)$-part of
the curvature. We remark that this type of connections is of
essential interest when studying holomorphic principal
connections, so that a bridge between the problem of existence of
them with the geometric objects constructed by invariant
polynomials is presented.

Roughly, the contents of the paper are organized as follows.
Section 2 builds the basic ingredients used in Section 3 to define
the invariants induced by maps to the space of flat connections
$\mathcal{F}$. Section 4 tackles the case of complex manifolds and
the formulation of the geometric invariants in the language of
Dolbeault cohomology. Finally, Section 5 is devoted to some
particular examples and applications which may serve as a
motivation for future further investigation of the invariants
herein defined.

All the objects in this article are smooth and the Einstein
notation on repeated indices is assumed.

\section{Forms in the space of connections $\mathcal{A}$}

Let $M$ be a $C^\infty$ manifold of dimension $n$, and let $G$ be
a Lie group of dimension $m$. The Lie algebra of $G$ will be
denoted by $\mathfrak g$. Let $\pi\,:\,P\,\longrightarrow\, M$ be
a $C^\infty$ principal $G$-bundle with
$$
d\pi\,:\, TP\,\longrightarrow\,\pi^*TM
$$
being the differential of $\pi$. For any $x\, \in\, M$, the fiber
$\pi^{-1}(x)$ will be denoted by $P_x$. The vector bundle
$$\pi_{\mathrm{ad}}\,:\, \mathrm{ad}P\, :=\, P\times^G {\mathfrak g}
\,\longrightarrow\, M$$ associated to $P$ for the adjoint action
of $G$ on $\mathfrak g$ is known as the adjoint vector bundle of $P$. We
note that $\mathrm{ad}P\,=\, (\text{kernel}(d\pi))/G$. Therefore,
any $\xi\,\in\, (\mathrm{ad}P)_{x}$, $x\,\in\, M$, produces a
vector field along the fiber $\pi^{-1}(x)$ which is preserved by
the action of $G$ on $\pi^{-1}(x)$.

Let $\text{At}(P)\,:=\, (TP)/G\,\longrightarrow\, M$ be the Atiyah bundle
for $P$. It fits in a short exact sequence
$$
0\,\longrightarrow\, \mathrm{ad}P \,\longrightarrow\,
\text{At}(P) \,\stackrel{d\pi}{\longrightarrow}\, TM
\,\longrightarrow\, 0
$$
which is known as the Atiyah exact sequence (see \cite{Ati}).
Consider
$$
\text{At}(P)\otimes T^*M \,\stackrel{d\pi\otimes {\rm
Id}}{\longrightarrow}\, TM\otimes T^*M \,=\, {\rm End}(TM)\, ,
$$
and define $C\,:=\, (d\pi\otimes {\rm Id})^{-1}({\rm Id}_{TM})\,\subset\,
\text{At}(P)\otimes T^*M$. Clearly,
\begin{equation}\label{eqq}
q\,:\, C\,\longrightarrow\,M
\end{equation}
is an affine bundle for the vector bundle $(\mathrm{ad}P)\otimes T^*M$. This
affine bundle $(C\, ,q)$ is known as the bundle of connections for $P$. Any
section $A\,:\,M\,\longrightarrow\, C$ of $q$ is thus a connection on
$P$, that is
\begin{equation}\label{e1}
\mathcal{A}\,=\,\Gamma(M,\, C)\, ,
\end{equation}
where $\mathcal{A}$ is the space of all connections on $P$. The
difference $A_{1}-A_{2}$ of two connections $A_{1},A_{2}\,\in\,\mathcal{A}$ is a
$1$-form in $M$ taking values in $\mathrm{ad}P$; note that the
kernel of $d\pi\otimes {\rm Id}$ is $\mathrm{ad}P\otimes T^*M$. The isomorphism in
\eqref{e1} is compatible with the affine space structures of $C$ and $\mathcal{A}$.

Any element of $q^{-1}(x)\, \subset\, C$, $x\, \in\, M$, provides
a $G$-invariant homomorphism of vector
bundles $P_x\times T_xM\, \longrightarrow\, (TP)\vert_{P_x}$ over $P_x$
whose pre-composition with $d\pi$ is the
identity map of $P_x\times T_xM$; it is a horizontal lift of tangent vectors over $x$.
We now consider a coordinate domain $U\,\subset\, M$, with
coordinates $(x^{1},\cdots ,x^{n})$, and such that $\pi^{-1}(U)
\,\simeq\, U\times G$ (after choosing a trivialization of
$P\vert_U$). For any $B\,\in\,\mathfrak{g}$, let
$\widetilde{B}$ be the vertical $G$-invariant vector field on
$P\vert_U\,\simeq\, U\times G$ defined as
\[
\widetilde{B}_{(x,g)}\,=\,\left. \frac{d}{dt}\right\vert _{t=0}(x,\exp(tB)g)\, .
\]
If $(B_{1}\, ,\cdots\, ,B_{m})$ is a basis of $\mathfrak{g}$, the
system $(\widetilde{B}_{1},\cdots ,\widetilde{B}_{m})$ is a basis
of sections of $\mathrm{ad}P$ considered as a
$C^\infty(U)$-module. In addition, the horizontal lift given by
$A_x\,\in\, q^{-1}(x)$ has an expression like
$$
\frac{\partial}{\partial x^i}\,\longmapsto\, \frac{\partial}{\partial x^i}
- A^{\alpha} _i \widetilde{B}_{\alpha}\, .
$$
The functions $(x^i,A^\alpha _j)$, $i,j\,=\,1,\cdots ,n$, $\alpha
\,=\,1,\cdots , m$, define a coordinate system on $q^{-1}(U)\,\subset\,
C$.

The bundle of connections is equipped with a canonical $2$-form
$F$ taking values in the bundle
$q^{\ast}\mathrm{ad}P\,\longrightarrow\, C$ called the universal
curvature. It is the curvature form of a canonical connection
defined on the principal $G$-bundle $q^{\ast}P\,\longrightarrow\,
C$ (for example, see \cite{G} as well as \cite{B}, \cite{BHS}, \cite{CM}) and
satisfies the following condition: given a section $A\,\in\, \Gamma(C)$ of the bundle
of connections, the pulled back form
\[
A^{\ast}F\,\in\, \Omega^{2}(M,\, A^*q^{\ast}\mathrm{ad}P)
\,=\,\Omega^{2}(M,\, \mathrm{ad}P)
\]
coincides with the curvature $F^{A}$ of the connection $A$. For a coordinate
system on $C$ as described above, the expression of the form $F$ is
\begin{equation}
F\,=\,(dA_{i}^{\alpha}\wedge
dx^{i}+\tfrac{1}{2}c_{\beta\gamma}^{\alpha}A_{i}^{\beta}
A_{j}^{\gamma}dx^{i}dx^{j})\widetilde{B}_{\alpha}\, ,\label{unicu}
\end{equation}
where $c_{\beta\gamma}^{\alpha}$ are the structure constants of the basis
$(B_{1},\cdots ,B_{m})$ of $\mathfrak{g}$.

Let $p\,\in\, S^{r}(\mathfrak{g}^{\ast})^{G}$ be a symmetric polynomial of degree $r$
on $\mathfrak{g}$ which
is invariant under the adjoint action of $G$ on $\mathfrak g$. We
define a $2r$-form in $C$ in the usual way:
\[
p(F)\,=\, p(F,\overset{(r)}{\cdots},F)\,\in\,\Omega^{2r}(C)\, .
\]
{}From the Chern-Weil theory we know that the form $p(F)$ is closed. For any
$A\,\in\,\Gamma(C)$, the pull-back
$A^{\ast}p(F)\,\in\,\Omega^{2r}(M)$ is the characteristic form
$p(F^{A})$ defined by the Chern-Weil theory. Note that the
homomorphism $q^*\,:\, H^{\bullet}(M,\, \mathbb{R})\, \longrightarrow\,
H^{\bullet }(C,\, \mathbb{R})$ is an isomorphism. The forms $p(F)$
produce the characteristic classes of the principal $G$-bundle $P$.

A gauge transformation of $P$ is a $G$-equivariant diffeomorphism
$\Phi\,:\,P\,\longrightarrow\, P$ such that $\pi\circ\Phi\,=\,\pi$. The set
of gauge transformations of $P$, which will be
denoted by $\mathrm{Gau}P$, is a group under the composition of
maps. This group $\mathrm{Gau}P$ acts on connections so that any
$\Phi \,\in\, \mathrm{Gau}P$ defines a transformation $\Phi
_{\mathcal{A}}\,:\, \mathcal{A}\, \longrightarrow\, \mathcal{A}$. At the level the
bundle of connections, this action also induces an affine
isomorphism
\begin{equation}\label{pc}
\Phi_{C}\,:\,C\,\longrightarrow\, C \, .
\end{equation}
It is easy to see that the characteristic forms $p(F)$ defined
above are gauge invariant, that is, $\Phi _C^* p(F)\,=\,p(F)$, for all
$\Phi \,\in \,\mathrm{Gau}P$. It is known that the algebra of gauge
invariant forms in $C$ is precisely the ring
\[
\Omega^{\bullet}(C)^{\mathrm{Gau}P}\,=\,\Omega^{\bullet}(M)[p_{1}(F),\cdots ,p_{l}
(F)]\, ,
\]
where $p_{1},\cdots ,p_{l}$ generate $S^{\bullet}(\mathfrak{g}^{\ast
})^{G}$ (see \cite{CM2}).

For any $p\,\in\, S^{r}(\mathfrak{g}^{\ast})^{G}$ and
$0\,\leq\, k\,\leq\,2r$, we construct a $k$-form
\[
\beta_{k}^{p}\,\in\,\Omega^{k}(\mathcal{A},\, \Omega^{2r-k}(M))
\]
on $\mathcal{A}$ (defined in \eqref{e1})
taking values in the vector space $\Omega^{2r-k}(M)$ as follows:
\begin{equation}
\beta_{k}^{p}(A)(\xi_{1},\cdots ,\xi_{k})\,=\,A^{\ast}(i_{\xi_{1}}\cdots i_{\xi_{k}
}p(F))\, ,\label{beta}
\end{equation}
for $A\,\in\,\mathcal{A}\,=\,\Gamma(C)$, and $\xi_{1},\cdots
,\xi_{k}\,\in\, T_{A}\mathcal{A}\,=\, \Omega^{1}(M,\mathrm{ad}P)$; by
$i_{\xi_{j}}$ we denote the contraction of differential forms by the vector
field $\xi_{j}$.
As $C\,\longrightarrow\, M$ is an affine bundle modelled on the vector bundle
$T^{\ast}M\otimes\mathrm{ad}P\,\longrightarrow\, M$, we consider
$\xi_{i}$ as a vertical vector field, with respect to the
projection $q$, along $A(M)$. If $k\,=\,0$, then
$\beta_{0}^{p}\,\in\, C^{\infty }(\mathcal{A},\,\Omega^{2r}(M))$ is
just the characteristic form $\beta_{0}^{p}(A)\,=\,p(F^{A})$.

We assume in the following that $k\,>\,0$.

\begin{lemma}
\label{lem2.1}
 For $k\,\leq\, r$, we have
\[
\beta_{k}^{p}(A)(\xi_{1},\cdots ,\xi_{k})\,=\,N_{r,k}p(\xi_{1},\cdots ,\xi_{k},F^{A}
,\overset{(r-k)}{\cdots},F^{A})\, ,
\]
with $N_{r,k}\,=\,r(r-1)\cdots(r-k+1)\,=\, \frac{r!}{(r-k)!}$, where
$\xi_{1},\cdots,\xi_{k}\,\in\,
T_{A}\mathcal{A}\,=\,\Omega^{1}(M,\,\mathrm{ad}P)$.

If $r\,<\,k\,\leq\,2k$, then we have $\beta_{k}^{p}\,=\,0$.
\end{lemma}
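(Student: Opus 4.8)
The plan is to compute everything in the local coordinates $(x^i,A_j^\alpha)$ on $q^{-1}(U)\subset C$ introduced above, where the universal curvature reads $F\,=\,F^\alpha\widetilde B_\alpha$ with $F^\alpha\,=\,dA_i^\alpha\wedge dx^i+\tfrac12 c^\alpha_{\beta\gamma}A_i^\beta A_j^\gamma\,dx^i\wedge dx^j$, as in \eqref{unicu}. Writing $p_{\alpha_1\cdots\alpha_r}\,:=\,p(B_{\alpha_1},\cdots,B_{\alpha_r})$ for the symmetric coefficients of $p$ in the basis $(B_1,\cdots,B_m)$, one has $p(F)\,=\,p_{\alpha_1\cdots\alpha_r}\,F^{\alpha_1}\wedge\cdots\wedge F^{\alpha_r}$. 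A tangent vector $\xi\,\in\,T_A\mathcal A\,=\,\Omega^1(M,\mathrm{ad}P)$, regarded as a $q$-vertical field along $A(M)$, has the local form $\xi\,=\,\xi_i^\alpha\,\partial/\partial A_i^\alpha$. The whole argument rests on two elementary contraction identities. First, since $dx^i$ annihilates vertical vectors while $dA_i^\alpha(\partial/\partial A_j^\beta)\,=\,\delta^i_j\delta^\alpha_\beta$, only the mixed term of $F^\alpha$ survives and
\[
i_{\xi}F^\alpha \,=\, \xi_i^\alpha\, dx^i\, ,
\]
a horizontal $1$-form whose $A$-pullback is exactly the $\alpha$-component of $\xi$. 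Second, because $F^\alpha$ has no purely vertical part (it is at most linear in the $dA_i^\alpha$), a second vertical contraction gives
\[
i_{\eta}i_{\xi}F^\alpha \,=\, i_\eta(\xi_i^\alpha\, dx^i) \,=\, \xi_i^\alpha\, i_\eta dx^i \,=\, 0
\]
for any vertical $\eta$. Thus each scalar factor $F^{\alpha_j}$ can absorb at most one vertical contraction, after which it becomes a horizontal $1$-form killed by any further vertical contraction.

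Next I would expand $i_{\xi_1}\cdots i_{\xi_k}\,p(F)$ using the graded Leibniz rule for the contraction antiderivations $i_{\xi_a}$. By the second identity, in the full expansion only those terms survive in which the $k$ operators $i_{\xi_1},\cdots,i_{\xi_k}$ meet $k$ \emph{distinct} factors among $F^{\alpha_1},\cdots,F^{\alpha_r}$; any term in which two operators land on the same factor vanishes. Surviving terms are therefore indexed by injections $\{1,\cdots,k\}\hookrightarrow\{1,\cdots,r\}$. This immediately settles the second assertion: if $r\,<\,k\,\leq\,2r$ there are no such injections, so the expansion is empty and hence $\beta_k^p\,=\,0$. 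For $k\,\leq\,r$ the number of injections is $r(r-1)\cdots(r-k+1)\,=\,N_{r,k}$, which is the source of the claimed constant.

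The delicate point — and the one requiring real care — is the bookkeeping of signs. Contractions passing the even-degree factors $F^{\alpha_j}$ produce no sign, but once the $1$-forms $i_{\xi_a}F^{\alpha_j}$ appear, later contractions passing them, together with the reordering of these odd-degree factors inside the wedge product, do generate signs. The claim is that these signs conspire so that all $N_{r,k}$ injective terms coincide, after pullback, with a single symmetric evaluation. I would verify this by contracting in a fixed order, producing the $1$-forms $i_{\xi_a}F$ successively, and then invoking the symmetry of $p_{\alpha_1\cdots\alpha_r}$ to relabel each term into the standard slot pattern; the residual signs from moving the $1$-form factors past one another match exactly the sign needed to reorder the arguments $\xi_1,\cdots,\xi_k$ in $p$, so no cancellation occurs (the cases $r\,=\,k\,=\,1$ and $r\,=\,k\,=\,2$ can be checked by hand to calibrate this). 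Finally, applying $A^{\ast}$ and using $A^{\ast}F^\alpha\,=\,(F^A)^\alpha$ together with the first contraction identity $A^{\ast}(i_{\xi_a}F^\alpha)\,=\,(\xi_a)^\alpha$ converts the universal expression into $N_{r,k}\,p(\xi_1,\cdots,\xi_k,F^A,\overset{(r-k)}{\cdots},F^A)$, which is the first assertion.
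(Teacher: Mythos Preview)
Your proof is correct and follows essentially the same route as the paper's own argument: both establish the two contraction identities $i_{\xi}F\,=\,q^{\ast}\xi$ and $i_{\eta}i_{\xi}F\,=\,0$ from the local expression \eqref{unicu}, then distribute the iterated contractions over the $r$ factors of $p(F)$ via the Leibniz rule, and use the symmetry of $p$ to collect the $N_{r,k}$ surviving terms. The paper's version is terser, simply asserting $i_{\xi}p(F)\,=\,r\cdot p(q^{\ast}\xi,F,\ldots,F)$ and then iterating, whereas you spell out the injective-assignment combinatorics and (commendably) flag the sign bookkeeping that the paper's proof leaves implicit; but the underlying mechanism is identical.
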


\begin{proof}
Using formula (\ref{unicu}), it is easy to see that for $\xi_{1}\, ,\xi_{2}
\,\in\,\Omega^{1}(M,\mathrm{ad}P)$, we have
\[
i_{\xi_{1}}F\,=\,q^{\ast}\xi_{1}\,,\qquad i_{\xi_{2}}i_{\xi_{1}}F\,=\,0\, ,
\]
where $q$ is the projection in \eqref{eqq}. Therefore, we have
\[
i_{\xi}p(F)\,=\,r\cdot p(q^{\ast}\xi,F,\overset{(r-1)}{\cdots},F)
\]
and then, for $k\,\leq \,r$,
\begin{align*}
\beta_{k}^{p}(A)(\xi_{1},\cdots ,\xi_{k}) & =\,A^{\ast}(i_{\xi_{1}}\cdots i_{\xi_{k}
}p(F))\\
& =\,A^{\ast}(N_{r,k}p(q^{\ast}\xi_{1},\cdots ,q^{\ast}\xi_{k},F,\overset{(r-k)}
{\cdots},F)\\
& =\,N_{r,k}p(\xi_{1},\cdots ,\xi_{k},F^{A},\overset{(r-k)}{\cdots},F^{A}).
\end{align*}
For $k\,>\,r$, we have $\beta_{k}^{p}(\xi_{1},\cdots ,\xi_{k})\,=\,0$.
\end{proof}

\begin{proposition}
\label{inv}The form $\beta_{k}^{p}$ is invariant under the action of the gauge
group in $\mathcal{A}$, in other words,
\[
\Phi_{\mathcal{A}}^{\ast}(\beta_{k}^{p})\,=\,\beta_{k}^{p},\qquad\forall\ \Phi
\,\in\, \mathrm{Gau}P\, ,
\]
where $\Phi_{\mathcal{A}}\,:\,\mathcal{A}\,\longrightarrow\, \mathcal{A}$ is the
automorphism induced by $\Phi_{C}$ in \eqref{pc}.
\end{proposition}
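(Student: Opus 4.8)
The plan is to reduce the statement to the already-recorded gauge invariance of the universal characteristic form on $C$, namely $\Phi_C^\ast p(F)\,=\,p(F)$, by exploiting the relation between $\Phi_{\mathcal{A}}$ and $\Phi_C$. Since $\Phi_C\,:\,C\,\longrightarrow\, C$ is an affine bundle automorphism covering $\mathrm{Id}_M$, a section $A\,\in\,\Gamma(C)\,=\,\mathcal{A}$ is sent by $\Phi_{\mathcal{A}}$ to $\Phi_{\mathcal{A}}(A)\,=\,\Phi_C\circ A$, so that $(\Phi_{\mathcal{A}}(A))^\ast\,=\,A^\ast\circ\Phi_C^\ast$ on forms over $C$. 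Unwinding the definition of the pullback of the $\Omega^{2r-k}(M)$-valued form $\beta_k^p$ on $\mathcal{A}$ (whose values lie in the fixed vector space $\Omega^{2r-k}(M)$, so the pullback acts only on the base point and the tangent arguments), one obtains
\[
(\Phi_{\mathcal{A}}^\ast\beta_k^p)(A)(\xi_1,\cdots,\xi_k)\,=\,A^\ast\bigl(\Phi_C^\ast(i_{\xi_1'}\cdots i_{\xi_k'}p(F))\bigr)\, ,
\]
where $\xi_i'\,:=\,(d\Phi_{\mathcal{A}})_A(\xi_i)$ is the image of $\xi_i$, regarded as a vertical vector field along $\Phi_C(A(M))$.

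The next step is to move $\Phi_C^\ast$ inside the contractions. For a diffeomorphism $\Psi$ one has the naturality identity $\Psi^\ast(i_Y\omega)\,=\,i_{\Psi^\ast Y}(\Psi^\ast\omega)$, with $(\Psi^\ast Y)_c\,=\,(d\Psi_c)^{-1}(Y_{\Psi(c)})$. Applying this iteratively with $\Psi\,=\,\Phi_C$ and using $\Phi_C^\ast p(F)\,=\,p(F)$ gives
\[
\Phi_C^\ast(i_{\xi_1'}\cdots i_{\xi_k'}p(F))\,=\,i_{\Phi_C^\ast\xi_1'}\cdots i_{\Phi_C^\ast\xi_k'}\,p(F)\, .
\]
It then remains to identify $\Phi_C^\ast\xi_i'$ with $\xi_i$. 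Representing $\xi_i$ by the affine curve $t\,\longmapsto\, A+t\xi_i$ in $\mathcal{A}$ and differentiating $\Phi_C\circ(A+t\xi_i)$ fiberwise at $t\,=\,0$ shows that $\xi_i'(\Phi_C(A(x)))\,=\,(d\Phi_C)_{A(x)}(\xi_i(x))$, where $\xi_i(x)\,\in\,(\mathrm{ad}P\otimes T^\ast M)_x$ is read off from the canonical identification of the vertical tangent space of $C$ with $\mathrm{ad}P\otimes T^\ast M$. Hence $(\Phi_C^\ast\xi_i')_{A(x)}\,=\,\xi_i(x)$, i.e. $\Phi_C^\ast\xi_i'\,=\,\xi_i$ along $A(M)$. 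Substituting back yields $(\Phi_{\mathcal{A}}^\ast\beta_k^p)(A)(\xi_1,\cdots,\xi_k)\,=\,A^\ast(i_{\xi_1}\cdots i_{\xi_k}p(F))\,=\,\beta_k^p(A)(\xi_1,\cdots,\xi_k)$, as required.

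The step I expect to be the main obstacle is the precise bookkeeping in the last identification: one must use carefully that the linear part of the fiberwise-affine map $\Phi_C$ is exactly the bundle map on $\mathrm{ad}P\otimes T^\ast M$ induced by $\Phi$, while everything else is formal manipulation. As an alternative route that sidesteps the differential of $\Phi_{\mathcal{A}}$ on the infinite-dimensional manifold $\mathcal{A}$, one may argue directly from Lemma \ref{lem2.1}: for $k\,\leq\, r$ the gauge transformation sends $F^A\,\longmapsto\, F^{\Phi_{\mathcal{A}}(A)}$ and $\xi_i\,\longmapsto\,(d\Phi_{\mathcal{A}})_A(\xi_i)$, both by the fiberwise adjoint action on $\mathrm{ad}P$, so the $\mathrm{Ad}(G)$-invariance of $p$ leaves $p(\xi_1,\cdots,\xi_k,F^A,\overset{(r-k)}{\cdots},F^A)$ unchanged; for $r\,<\,k\,\leq\,2r$ both sides vanish by the same lemma.
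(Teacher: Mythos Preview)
Your proposal is correct. In fact your ``alternative route'' at the end is precisely the paper's own proof: the paper writes the pullback as $\beta_k^p(\Phi_{\mathcal{A}}(A))(\Phi_{\mathrm{ad}}\circ\xi_1,\ldots,\Phi_{\mathrm{ad}}\circ\xi_k)$, invokes Lemma~\ref{lem2.1}, uses $F^{\Phi_{\mathcal{A}}(A)}=\Phi_{\mathrm{ad}}\circ F^A$, and concludes by the $\mathrm{Ad}$-invariance of $p$.

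Your main argument takes a genuinely different route: instead of passing through the explicit formula of Lemma~\ref{lem2.1}, you stay at the level of the bundle of connections $C$ and deduce everything from the single fact $\Phi_C^\ast p(F)=p(F)$ together with the naturality identity $\Psi^\ast(i_Y\omega)=i_{\Psi^\ast Y}(\Psi^\ast\omega)$ and the observation $\Phi_C^\ast\xi_i'=\xi_i$. This is slightly more conceptual and has the pleasant feature that it treats all $k$ uniformly, without splitting into the cases $k\le r$ and $k>r$. The paper's approach, by contrast, is more hands-on: it makes visible exactly which ingredients (the adjoint action on $\mathrm{ad}P$ and the $G$-invariance of $p$) are doing the work, at the cost of relying on the preceding lemma. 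Both arguments are short; the bookkeeping you flagged (that the linear part of $\Phi_C$ on vertical tangent spaces is $\Phi_{\mathrm{ad}}\otimes\mathrm{Id}$) is the same identification the paper uses implicitly when writing $(\Phi_{\mathcal{A}})_\ast\xi_i=\Phi_{\mathrm{ad}}\circ\xi_i$.
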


\begin{proof}
For any choices of $\Phi\,\in\,\mathrm{Gau}P$ and $\xi_{1},\cdots ,\xi_{k}\,\in\,
T_{A}\mathcal{A}$, we have
\begin{align*}
(\Phi_{\mathcal{A}}^{\ast}\beta_{k}^{p})(A)(\xi_{1},\cdots ,\xi_{k}) & =\,
\beta_{k}^{p}(\Phi_{\mathcal{A}}(A))((\Phi_{\mathcal{A}})_{\ast}(\xi_{1}),\cdots ,(\Phi_{\mathcal{A}})_{\ast
}(\xi_{k}))\\
& =\,\beta_{k}^{p}(\Phi_{\mathcal{A}}(A))(\Phi_{\mathrm{ad}}\circ\xi_{1},\cdots ,
\Phi_{\mathrm{ad}}\circ\xi_{k})\, ,
\end{align*}
where $\Phi_{\mathrm{ad}}\,:\,\mathrm{ad}P\,\longrightarrow\,\mathrm{ad}P$ is the
automorphism of the adjoint bundle induced by $$\Phi\,:\,P\,\longrightarrow\, P\, .$$
Then
\begin{align*}
(\Phi_{\mathcal{A}}^{\ast}\beta_{k}^{p})(A)(\xi_{1},\cdots ,\xi_{k}) &
=\,N_{r,k}p(\Phi_{\mathrm{ad}}\circ\xi_{1},\cdots ,\Phi_{\mathrm{ad}}\circ\xi_{k},
F^{\Phi_{\mathcal{A}}(A)},\overset{(r-k)}{\cdots},F^{\Phi_{\mathcal{A}}
(A)})\\
& =\, N_{r,k}p(\Phi_{\mathrm{ad}}\circ\xi_{1},\cdots ,\Phi_{\mathrm{ad}}\circ\xi_{k}
,\Phi_{\mathrm{ad}}\circ F^{A},\overset{(r-k)}{\cdots},\Phi_{\mathrm{ad}}\circ
F^{A})\\
& =\,N_{r,k}p(\xi_{1},\cdots ,\xi_{k},F^{A},\overset{(r-k)}{\cdots},F^{A})\\
& =\,\beta_{k}^{p}(\xi_{1},\cdots,\xi_{k})\, ,
\end{align*}
by taking into account the invariance of $p$ under the adjoint action.
\end{proof}

\section{Cohomology defined by maps to $\mathcal{F\subset A}$\label{secF}}

\subsection{Definition of the invariant}

\begin{proposition}
\label{Prop1}
\emph{The identity}
\[
d_{\mathcal{A}}\beta_{k}^{p}\,=\,(r-k)~d\circ\beta_{k+1}^{p}
\]
holds for all $0\,\leq\, k\,\leq\, r$, where
\[
d_{\mathcal{A}}\,:\,\Omega^{k}(\mathcal{A},\,\Omega^{2r-k}(M))\,\longrightarrow\,\Omega
^{k+1}(\mathcal{A},\,\Omega^{2r-k}(M))
\]
is the de Rham differential of forms in $\mathcal{A}$ taking values in the vector
space $\Omega^{2r-k}(M)$, and
\[
d\,:\,\Omega^{2r-k-1}(M)\,\longrightarrow\,\Omega^{2r-k}(M)
\]
is the standard de Rham differential of forms in $M$.

In particular, the form $d_{\mathcal{A}}\beta_{k}^{p}$ takes
values in the subspace of exact forms $B^{2r-k}(M)\,:=\,
d\Omega^{2r-k-1}(M) \,\subset\,\Omega^{2r-k}(M)$.
\end{proposition}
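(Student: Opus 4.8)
The plan is to use that $\mathcal{A}$ is an affine space modelled on the vector space $V\,=\,\Omega^{1}(M,\,\mathrm{ad}P)$, so that every tangent vector $\xi\,\in\, T_{A}\mathcal{A}\,=\,V$ extends to a constant vector field on $\mathcal{A}$ and any two such fields have vanishing Lie bracket. Taking $\xi_{0},\cdots,\xi_{k}\,\in\, V$ to be these constant extensions, the intrinsic formula for the exterior derivative on $\mathcal{A}$ collapses, since all bracket terms drop out, to
\[
(d_{\mathcal{A}}\beta_{k}^{p})(A)(\xi_{0},\cdots,\xi_{k})\,=\,\sum_{i=0}^{k}(-1)^{i}\,D_{\xi_{i}}\big(\beta_{k}^{p}(\,\cdot\,)(\xi_{0},\cdots,\widehat{\xi_{i}},\cdots,\xi_{k})\big)(A)\, ,
\]
where $D_{\xi_{i}}$ denotes the directional derivative along the straight line $t\,\mapsto\, A+t\xi_{i}$. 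Thus the first task is simply to differentiate the function $\beta_{k}^{p}$ along these lines.

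First I would compute this directional derivative from the explicit expression of Lemma \ref{lem2.1}. In $\beta_{k}^{p}(A)(\xi_{0},\cdots,\widehat{\xi_{i}},\cdots,\xi_{k})\,=\,N_{r,k}\,p(\xi_{0},\cdots,\widehat{\xi_{i}},\cdots,\xi_{k},F^{A},\overset{(r-k)}{\cdots},F^{A})$ the only dependence on $A$ sits in the $r-k$ curvature slots, and the first variation of the curvature along the line is the standard identity $\left.\frac{d}{dt}\right|_{t=0}F^{A+t\xi_{i}}\,=\,d_{A}\xi_{i}$, where $d_{A}$ is the covariant exterior derivative of $A$ acting on $\mathrm{ad}P$-valued forms. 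Applying the Leibniz rule to the multilinear $p$ and using that all $r-k$ curvature entries coincide, differentiation produces the multiplicity $r-k$ and replaces one $F^{A}$ by $d_{A}\xi_{i}$, so that $D_{\xi_{i}}\big(\beta_{k}^{p}(\,\cdot\,)(\cdots)\big)(A)\,=\,N_{r,k}(r-k)\,p(\xi_{0},\cdots,\widehat{\xi_{i}},\cdots,\xi_{k},d_{A}\xi_{i},F^{A},\overset{(r-k-1)}{\cdots},F^{A})$.

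On the other side I would expand $d\circ\beta_{k+1}^{p}$ by applying the de Rham differential of $M$ to the $\mathrm{ad}P$-valued form $\beta_{k+1}^{p}(A)(\xi_{0},\cdots,\xi_{k})\,=\,N_{r,k+1}\,p(\xi_{0},\cdots,\xi_{k},F^{A},\overset{(r-k-1)}{\cdots},F^{A})$. The tool here is the Chern--Weil Leibniz identity for an $\mathrm{Ad}$-invariant polynomial evaluated on $\mathrm{ad}P$-valued forms, namely $d\,p(\omega_{1},\cdots,\omega_{r})\,=\,\sum_{j}(-1)^{|\omega_{1}|+\cdots+|\omega_{j-1}|}p(\omega_{1},\cdots,d_{A}\omega_{j},\cdots,\omega_{r})$, which rests precisely on the $\mathrm{Ad}$-invariance of $p$. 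Applying it, every term in which $d_{A}$ falls on a curvature factor vanishes by the Bianchi identity $d_{A}F^{A}\,=\,0$, leaving only the $k+1$ terms that differentiate the one-forms $\xi_{i}$, each carrying the sign $(-1)^{i}$.

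The last step is to compare the two expansions, and this is where I expect the real care to be needed. Using the graded symmetry of $p$ one moves each two-form $d_{A}\xi_{i}$ past the remaining one-forms $\xi_{j}$; since transposing a degree-$2$ entry past a degree-$1$ entry costs the sign $(-1)^{2\cdot 1}\,=\,1$, these moves are free, and the two sides become term-by-term proportional. The proposition then reduces to a single numerical identity among the Chern--Weil normalisation $N_{r,k}$, the variational multiplicity $r-k$ produced by the curvature slots, and $N_{r,k+1}$; verifying that this scalar comes out exactly as stated, while keeping track of the degree-parity signs, is the only genuinely delicate point, everything else being the formal manipulations above. The final ``in particular'' assertion then needs no extra work: the right-hand side is manifestly $d$ applied to an $\Omega^{2r-k-1}(M)$-valued form, hence takes values in $B^{2r-k}(M)\,=\,d\,\Omega^{2r-k-1}(M)$.
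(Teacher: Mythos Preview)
Your proposal is correct and follows essentially the same route as the paper's own proof: expand $d_{\mathcal{A}}$ using constant vector fields on the affine space $\mathcal{A}$ so that the bracket terms vanish, differentiate the curvature slots via $\left.\tfrac{d}{dt}\right|_{t=0}F^{A+t\xi}=\nabla^{A}\xi$, and then identify the resulting alternating sum with the de~Rham differential of $\beta_{k+1}^{p}$ by means of the Chern--Weil Leibniz rule together with the Bianchi identity. The only cosmetic difference is that the paper carries out this last step upstairs on the bundle of connections $C$ (using the universal curvature $F$ and the identity $\nabla F=0$, then pulling back along $A$), whereas you apply the Leibniz rule directly on $M$ with $F^{A}$ and $d_{A}F^{A}=0$; the two are equivalent. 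Your caution about the final scalar bookkeeping is well placed: with $N_{r,k+1}=(r-k)N_{r,k}$ the two sides match up, and indeed the paper's own proof leaves that numerical comparison implicit in its last displayed line.
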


\begin{proof}
We have
\begin{gather*}
d_{\mathcal{A}}\beta_{k}(\xi_{0},\cdots ,\xi_{k})\,=\,
{\textstyle\sum}
(-1)^{i}\xi_{i}(\beta_{k}(\xi_{0},\cdots ,\widehat{\xi}_{i},\cdots ,\xi_{k+1}))\\
+
{\textstyle\sum_{i<j}}
(-1)^{i+j}\beta_{k}([\xi_{i},\xi_{j}],\xi_{0},\cdots ,\widehat{\xi}_{i},\cdots ,
\widehat{\xi }_{j},\cdots ,\xi_{k})\\
=\,
{\textstyle\sum}
(-1)^{i}\xi_{i}(A^{\ast}(i_{\xi_{0}}\cdots\widehat{\imath}_{\xi_{i}}\cdots
i_{\xi_{k}}p(F)))\, ,
\end{gather*}
for $\xi_{0},\cdots ,\xi_{k}\,\in\, T_{A}\mathcal{A}$, where $[\xi_{i}\, ,\xi_{j}]\,=\,0$ as
we take $\xi_{i}$ to be the constant vector fields in the affine space
$\mathcal{A}$; by $\widehat{\xi}_{i}$ it is meant that ${\xi}_{i}$ is omitted. Hence
\begin{gather*}
\xi_{i}(A^{\ast}(i_{\xi_{0}}\cdots\widehat{\imath}_{\xi_{i}}\cdots i_{\xi_{k}
}p(F)))\\
=\,N_{r,k}\left. \frac{d}{dt}\right\vert _{t=0}(A+t\xi_{i})^{\ast}p(q^{\ast}
\xi_{0},\cdots ,q^{\ast}\xi_{i-1},q^{\ast}\xi_{i+1},\cdots ,q^{\ast}\xi_{k}
,F,\overset{(r-k)}{\cdots},F)\\
=\,N_{r,k}\left. \frac{d}{dt}\right\vert _{t=0}p(\xi_{0},\cdots ,\xi_{i-1}
,\xi_{i+1},\cdots ,\xi_{k},F^{A+t\xi_{i}},\overset{(r-k)}{\cdots },F^{A+t\xi_{i}})\\
=\,(r-k)N_{r-k}\cdot p(\xi_{0},\cdots ,\xi_{i-1},\xi_{i+1},\cdots ,\xi_{k},\nabla^{A}
\xi_{i},F^{A},\overset{(r-k-1)}{\cdots },F^{A})\, ;
\end{gather*}
we have used the fact that
\[
\left. \frac{d}{dt}\right\vert _{t=0}F^{A+t\xi}\,=\, \nabla^{A}\xi
\]
for $\xi\,\in\,\Omega^{1}(M,\,\mathrm{ad}P)$ and $A\,\in\,\mathcal{A}$. Then
\[
d_{\mathcal{A}}\beta_{k}(\xi_{0},\cdots ,\xi_{k})\,=\,(r-k)N_{r,k}
{\textstyle\sum} (-1)^{i}p(\xi_{0},\cdots
,\xi_{i-1},\nabla^{A}\xi_{i},\xi_{i+1},\cdots ,\xi_{k}
,F^{A},\overset{(r-k-1)}{\cdots},F^{A})\, .
\]

Recall that $C(P)$ is endowed with a derivation law
\[
\nabla\,:\,\Omega^{s}(C(P),\,q^{\ast}\mathrm{ad}P)\,\longrightarrow\,\Omega^{s+1}
(C(P),\, q^{\ast}\mathrm{ad}P)
\]
given by the universal connection of the principal $G$-bundle $q^{\ast}P\,
\longrightarrow\, C$. This derivation satisfies
\begin{gather*}
dp(\xi_{0},\cdots ,\xi_{k},F,\overset{(r-k-1)}{\cdots },F)\,=\,
{\textstyle\sum}
(-1)^{i}p(\xi_{0},\cdots ,\xi_{i-1},\nabla\xi_{i},\xi_{i+1},\cdots ,\xi_{k}
,F,\overset{(r-k-1)}{\cdots },F)\\
+(-1)^{2k}[p(\xi_{0},\cdots ,\xi_{k},\nabla F,\overset{(r-k-1)}{\cdots },F)+\cdots
+p(\xi_{0},\cdots ,\xi_{k},F,\overset{(r-k-1)}{\cdots },\nabla F)]\\
=\,
{\textstyle\sum}
(-1)^{i}p(\xi_{0},\cdots ,\xi_{i-1},\nabla\xi_{i},\xi_{i+1},\cdots ,\xi_{k}
,F,\overset{(r-k-1)}{\cdots },F)\, ,
\end{gather*}
where the last step is obtained by applying the Bianchi identity
$\nabla F\,=\,0$. Hence
\begin{gather*}
dA^{\ast}(p(\xi_{0},\cdots ,\xi_{k},F,\overset{(r-k-1)}{\cdots },F))\,=\,A^{\ast}
(dp(\xi_{0},\cdots ,\xi_{k},F,\overset{(r-k-1)}{\cdots },F))\\
=\, {\textstyle\sum}
(-1)^{i}p(\xi_{0},\cdots ,\xi_{i-1},\nabla^{A}\xi_{i},\xi_{i+1},\cdots ,\xi_{k}
,F^{A},\overset{(r-k-1)}{\cdots },F^{A})\, ,
\end{gather*}
and then
\[
d_{\mathcal{A}}\beta_{k}(\xi_{0},\cdots ,\xi_{k})\,=\,(r-k)d\left(
A^{\ast}( p(\xi_{0},\cdots ,\xi_{k},F,\overset{(r-k-1)}{\cdots
},F))\right)\, ,
\]
proving the proposition.
\end{proof}

\begin{remark}
In particular, for $k=r$ we have
\[
d_{\mathcal{A}}\beta_{r}^{p}\,=\, 0\, .
\]
\end{remark}

A connection on $P$ gives a $1$-forms in $P$ with values in
the Lie algebra $\mathfrak{g}$. Therefore,
we can equip $\mathcal{A}$ with the structure of a Fr\'{e}chet manifold. Let
$\mathcal{F}\,\subset\,\mathcal{A}$ be the closed subspace defined by the flat
connections on $P$. This subspace may be empty, but we are interested in the case
where $\mathcal{F}\,\neq\,\varnothing$. Take a compact oriented manifold
$S$, with $\dim S\,=\,k-1$, together with a smooth map
\[
f\,:\,S\,\longrightarrow\, \mathcal{F}\, .
\]
We assume that there is another compact oriented manifold $T$ of dimension $k$
such that
\[
\partial T\,=\, S\, ,
\]
and the inclusion $S\, \hookrightarrow\, T$ is compatible with the orientations
of $S$ and $T$. Since $\mathcal{A}$ is contractible, there is a smooth extension
\[
\overline{f}\,:\,T\,\longrightarrow\, \mathcal{A}\, ,\ \ \overline{f}\vert_{S}\,=\,f\, .
\]

We now define the main object of the work: given $S$, $f$, $T$ and $\overline{f}$
as above, consider the integral of the pull-back by $\overline{f}$ of the form
$\beta_{k}^{p}\,\in\, \Omega^{k}(\mathcal{A},\,\Omega^{2r-k}(M))$
\begin{equation}
\Lambda_{k}^{p}(f)\,:=\,\int_{T}\overline{f}^{\ast}\beta_{k}^{p}\,\in\,\Omega^{2r-k}
(M)\, .\label{def}
\end{equation}

There is an alternative way of defining $\Lambda_{k}^{p}(f)$ which
is close to the one used in \cite{Gu}. Consider the extended
principal bundle $T\times P\,\longrightarrow\, T\times M$, endowed
with the canonical connection $\widehat{A}$ defined as follows: The horizontal
lift of tangent vectors to $T$ is trivial, whereas the horizontal
lift of $v\,\in\, T_{x}M$ at a point $(z,\, x)\,\in\, T\times M$
is the one given by the connection
$\overline{f}(z)\,\in\,\mathcal{A}$. Let $\widehat{F}$ be the
curvature form of this connection, and let
$p(\widehat{F})\,\in\,\Omega^{2r}(T\times M)$ be the associated
characteristic form defined by the invariant polynomial $p$.

\begin{proposition}\label{alternative}
The form $\Lambda_{k}^{p}(f)\,\in\,\Omega^{2r-k}(M)$ is the
partial integration on $T\times M$
\begin{equation}\label{alternativeeq}
\Lambda_{k}^{p}(f)\,=\, \int_{T}p(\widehat{F})\, .
\end{equation}
\end{proposition}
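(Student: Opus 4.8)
The plan is to compare the two expressions by writing the curvature $\widehat F$ of the connection $\widehat A$ explicitly and then performing the fiber integration over $T$. The key observation is that $\widehat A$ is, by construction, precisely the connection on $T\times P\to T\times M$ whose restriction to each slice $\{z\}\times M$ is $\overline f(z)$, and whose horizontal lift in the $T$-directions is trivial. This is exactly the data encoded by the universal curvature $F$ pulled back along the map $(z,x)\mapsto \overline f(z)(x)\in C$. So first I would identify $T\times M\to C$ sending $(z,x)$ to the point $\overline f(z)(x)$ of the bundle of connections, and verify that $\widehat F$ is the pullback of the universal curvature $F$ under this map. Then $p(\widehat F)$ is the pullback of $p(F)$.

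Next I would compute $p(\widehat F)$ in a local product frame, splitting differentials into the $T$-part and the $M$-part. Since the horizontal lift along $T$ is trivial, the mixed terms of $\widehat F$ are governed exactly by the variation of $\overline f$ in the $z$-directions, which is what the contractions $i_{\xi_j}$ in the definition \eqref{beta} of $\beta_k^p$ record. Writing $\widehat F = F^{\overline f(z)} + (\text{terms linear in } dz)$, and expanding $p(\widehat F)$ multilinearly, the component of $p(\widehat F)$ that is a top-degree form in the $z$-variables (degree $k=\dim T$) and a $(2r-k)$-form in the $M$-variables is precisely $\overline f^{\ast}\beta_k^p$ up to the combinatorial factor. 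The lower $z$-degree components integrate to zero over the $k$-dimensional fiber $T$.

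Therefore, partial integration $\int_T p(\widehat F)$ picks out exactly the $z$-top-degree component, which by the preceding paragraph equals $\int_T \overline f^{\ast}\beta_k^p = \Lambda_k^p(f)$. I would make this precise by noting that for a vector field $\xi\in\Omega^1(M,\mathrm{ad}P)$ viewed as the infinitesimal variation $\frac{d}{dt}\big|_{t=0}\overline f$ in a $T$-direction, the formula in Lemma~\ref{lem2.1} identifies the contraction $i_\xi p(F)$ with $r\,p(\xi, F,\overset{(r-1)}{\cdots},F)$, which matches the expansion of the $dz$-linear part of $\widehat F$ inside $p(\widehat F)$.

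The main obstacle I expect is bookkeeping the combinatorial factor $N_{r,k}$ and the orientation/sign conventions in the fiber integration, so that the coefficient from expanding $p(\widehat F)$ in the $k$ distinct $dz$-directions matches $A^\ast(i_{\xi_1}\cdots i_{\xi_k}p(F))$ exactly. Concretely, choosing a local frame $\partial/\partial z^1,\dots,\partial/\partial z^k$ on $T$ with dual $dz^1\wedge\cdots\wedge dz^k$, the fiber integral of the $z$-top-degree part of $p(\widehat F)$ equals the evaluation of $\beta_k^p(\overline f(z))$ on the $k$ tangent vectors $\partial_{z^1}\overline f,\dots,\partial_{z^k}\overline f$, integrated against $dz^1\cdots dz^k$ over $T$; this is the defining expression \eqref{beta} once the antisymmetrization and the factor $N_{r,k}$ from Lemma~\ref{lem2.1} are reconciled. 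Everything else is a routine local computation using \eqref{unicu}, so the equality \eqref{alternativeeq} follows.
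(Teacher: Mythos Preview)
Your proposal is correct and follows essentially the same route as the paper: compute the components of $\widehat F$ in a local coordinate frame $(z^1,\dots,z^k)$ on $T$ (observing that the $(T,T)$-part vanishes and the mixed part is $\overline f_*(\partial/\partial z^i)$), expand $p(\widehat F)$, and invoke Lemma~\ref{lem2.1} to match the $N_{r,k}$ factor. Your additional remark that $\widehat F$ is the pullback of the universal curvature under $(z,x)\mapsto \overline f(z)(x)\in C$ is a pleasant conceptual gloss the paper does not make explicit, but the computation is the same.
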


\begin{proof}
Given coordinates $(z^{1}\, ,\cdots \, ,z^{k})$ on an open subset of $T$, it is easy
to check that
\begin{align*}
\widehat{F}(\partial/\partial z^{i},\cdot)|_{TM} & \,=\,\overline{f}^{\ast}(\partial
/\partial z^{i})\,\in\,\Omega^{1}(M,\,\mathrm{ad}P)\, ,\\
\widehat{F}(\partial/\partial z^{i},\partial/\partial z^{j}) & \,=\,0\, .
\end{align*}
Then
\begin{align*}
\int_{T}p(\widehat{F}) & \,=\,\int_{T}(i_{\partial/\partial z^{1}}\cdots i_{\partial
/\partial z^{k}}p(\widehat{F}))dz^{1}\cdots dz^{k}\\
& \,=\,N_{r,k}\int p(\overline{f}^{\ast}(\partial/\partial z^{1}),\cdots ,\overline{f}^{\ast
}(\partial/\partial z^{k}),F^{f(z)},\overset{(r-k)}{\cdots},F^{f(z)}
)dz^{1}\cdots dz^{k}\, ,
\end{align*}
and, taking into account Lemma \ref{lem2.1}, this integral is
precisely $\int_{T}\overline{f}^{\ast}\beta_{k}^{p}$.
\end{proof}

\begin{remark}
Furthermore, in the same vein of Proposition \ref{alternative}, we can provide a third approach to the form $\Lambda ^p _k (f)$. Let $A_0$ be any fixed connection on $P\to M$. We use the same notation $A_0$ for the pull-back connection on $T\times P\to T\times M$ defined by the projection $T\times M\to M$. The transgression formula provides a form $\vartheta (\widehat{A},A_0)\in \Omega ^{2r-1}(T\times M)$ such that
\[
p(\widehat{F})-p(F_0)=d\vartheta (\widehat{A},A_0).
\]
Then, from formula \eqref{alternativeeq} we have
\[
\Lambda ^p _k (f) = \int _Tp(F_0) +\int _Td\vartheta (\widehat{A},A_0) = \int _S \vartheta (\widehat{A},A_0),
\]
where the integral of $p(F_0)$ along $T$ vanishes as it is the pull-back of the characteristic form of the bundle $P\to M$ by the projection $T\times M\to M$. This definition of $\Lambda ^p_k (f)$ may be used for some of the proofs of the following. However, note that the form $\vartheta (\widehat{A},A_0)$ is not gauge invariant as it depends on the chosen connection $A_0$.

\end{remark}

\begin{lemma}\label{lem2}
The differential form $\Lambda_{k}^{p}(f)\,\in\,\Omega^{2r-k}(M)$,
$k\leq r$, constructed in \eqref{def} is closed.
\end{lemma}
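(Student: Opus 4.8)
The plan is to differentiate $\Lambda_k^p(f)$ under the integral sign, convert the resulting derivative along $T$ into a boundary integral over $S=\partial T$ by Stokes' theorem, and observe that flatness of the connections $f(s)$ makes this boundary term vanish. The one point requiring care is to keep apart the two exterior derivatives in play: the differential $d_{\mathcal{A}}$ along the parameter space $\mathcal{A}$, which pulls back under $\overline{f}\,:\,T\to\mathcal{A}$ to the de Rham differential $d_T$ on $T$, and the differential $d$ on $M$ acting on the $\Omega^{\bullet}(M)$-valued coefficients. Since $\overline{f}^{\ast}$ and the fibre integration $\int_T$ involve only the parameter directions, both commute with $d$; and $\overline{f}^{\ast}$ intertwines $d_{\mathcal{A}}$ with $d_T$.

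First I would write
\[
d\,\Lambda_k^p(f)\,=\,d\int_T\overline{f}^{\ast}\beta_k^p\,=\,\int_T d\bigl(\overline{f}^{\ast}\beta_k^p\bigr)\, ,
\]
where no boundary term appears because $d$ acts purely on the $\Omega^{2r-k}(M)$-coefficients. Next, applying Proposition \ref{Prop1} with $k$ replaced by $k-1$ gives $d_{\mathcal{A}}\beta_{k-1}^p\,=\,(r-k+1)\,d\circ\beta_k^p$, where the integer $r-k+1$ is positive because $1\le k\le r$. Pulling this identity back by $\overline{f}$ and using the commutation relations above converts it into
\[
d\bigl(\overline{f}^{\ast}\beta_k^p\bigr)\,=\,\frac{1}{r-k+1}\,d_T\bigl(\overline{f}^{\ast}\beta_{k-1}^p\bigr)\, .
\]

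I would then apply Stokes' theorem on the compact oriented manifold $T$ with $\partial T=S$ (with values in $\Omega^{2r-k}(M)$), together with $\overline{f}\vert_S=f$, to obtain
\[
d\,\Lambda_k^p(f)\,=\,\frac{1}{r-k+1}\int_T d_T\bigl(\overline{f}^{\ast}\beta_{k-1}^p\bigr)\,=\,\frac{1}{r-k+1}\int_S f^{\ast}\beta_{k-1}^p\, .
\]
Finally, the boundary integrand vanishes identically: by Lemma \ref{lem2.1} the form $\beta_{k-1}^p(f(s))$ is a constant multiple of a polynomial expression containing $r-(k-1)=r-k+1\ge 1$ factors of the curvature $F^{f(s)}$, and for $s\in S$ the connection $f(s)\in\mathcal{F}$ is flat, so $F^{f(s)}=0$. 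Hence $f^{\ast}\beta_{k-1}^p=0$ on $S$ and $d\,\Lambda_k^p(f)=0$.

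I expect the main obstacle to be purely bookkeeping: verifying that the two differentials commute correctly with $\overline{f}^{\ast}$ and $\int_T$, and tracking the correct index of $\beta$. It is precisely at the last step that the hypothesis $k\le r$ is used, since it guarantees at least one curvature factor in $\beta_{k-1}^p$, so that flatness along $S$ forces the boundary contribution to vanish. As an alternative one can argue through Proposition \ref{alternative}, writing $\Lambda_k^p(f)$ as the bidegree $(k,2r-k)$ part of $\int_T p(\widehat{F})$, using that $p(\widehat{F})$ is closed on $T\times M$ to trade the $M$-differential for $-d_T$ on the bidegree $(k-1,2r-k+1)$ component, and finishing again by Stokes and flatness on $S$.
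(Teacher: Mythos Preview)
Your proof is correct and follows essentially the same route as the paper's own argument: differentiate under the integral, invoke Proposition \ref{Prop1} (with the index shifted by one) to replace $d\circ\beta_k^p$ by $\frac{1}{r-k+1}\,d_{\mathcal{A}}\beta_{k-1}^p$, apply Stokes on $T$, and use that $\beta_{k-1}^p$ vanishes along $\mathcal{F}$ because it contains at least one curvature factor. Your version is in fact a bit more explicit about the bookkeeping (distinguishing $d$, $d_T$, $d_{\mathcal{A}}$ and their commutation with $\overline{f}^{\ast}$ and $\int_T$), which the paper's proof leaves implicit.
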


\begin{proof}
The case $k=0$ being trivial, we assume $k>0$. As $d$ is a linear
operator, we have
\begin{align*}
d\Lambda_{k}^{p}(f) &
\,=\,d\int_{T}\overline{f}^{\ast}\beta_{k}^{p}\,=\,\int_{T}
\overline{f}^{\ast}(d\circ\beta_{k}^{p})\\
&
\,=\,\frac{1}{r-k+1}\int_{T}\overline{f}^{\ast}(d_{\mathcal{A}}\beta_{k-1}^{p})\,=\,\frac{1}{r-k+1}\int_{S}\overline{f}^{\ast}
\beta_{k-1}^{p}\, ,
\end{align*}
where we have taken into account Proposition \ref{Prop1}. The last
integral is zero as $\beta_{k-1}^{p}$ vanishes along
$\mathcal{F}$.
\end{proof}

Lemma \ref{lem2} produces a map
\begin{align*}
\Lambda_{k}^{p}\,\colon\,\mathrm{Map}(S,F) & \longrightarrow\, H^{2r-k}(M,\, \mathbb{R})\\
f & \,\longmapsto\, [\Lambda_{k}^{p}(f)]
\end{align*}
which is also denoted by $\Lambda_{k}^{p}$ for notational convenience.
Furthermore, from the definition of $\beta ^p _k$ given in
Proposition \ref{alternative} we have the following:

\begin{lemma}\label{lem3}
The cohomology class $\Lambda^p_k$ is the cap product
$$
(p (q^*_M P))\cap [T]\, \in\, H^{2r-k}(M,\, \mathbb{R})\, ,
$$
where $q_M\, :\, T\times M\, \longrightarrow\, M$ is the natural
projection, and $p (q^*_M P)\, \in\, H^{2r}(T\times M,\,
\mathbb{R})$ is the characteristic class of the pull-back
principal bundle $q^*_M P\to T\times M $, defined by the invariant
polynomial $p$.
\end{lemma}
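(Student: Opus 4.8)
The plan is to read the statement directly off the fiber-integral presentation of $\Lambda_k^p(f)$ supplied by Proposition~\ref{alternative}, and then to recognize that integral as a slant (cap) product. Proposition~\ref{alternative} gives $\Lambda_k^p(f)=\int_T p(\widehat{F})$, where $\widehat{F}$ is the curvature of the canonical connection $\widehat{A}$ on the bundle $T\times P\to T\times M$. Since $T\times P$ is precisely the pull-back $q_M^\ast P$, Chern--Weil theory tells us that the closed form $p(\widehat{F})\in\Omega^{2r}(T\times M)$ represents the characteristic class $p(q_M^\ast P)\in H^{2r}(T\times M,\,\mathbb{R})$, the class being independent of the auxiliary connection $\widehat{A}$. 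The remaining task is to identify the partial integration $\int_T\colon\Omega^{2r}(T\times M)\to\Omega^{2r-k}(M)$ along the fiber of $q_M$ with the cap product against the fundamental class $[T]$.

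First I would deal with the boundary of $T$. From the structure of $\widehat{F}$ computed in the proof of Proposition~\ref{alternative}, namely $\widehat{F}(\partial/\partial z^i,\partial/\partial z^j)=0$ and the $TM$-$TM$ part of $\widehat{F}$ equal to $F^{\overline{f}(z)}$, one sees that over $z\in S$ the curvatures $F^{f(z)}$ vanish (flatness); since $\dim S=k-1<r$, a nonzero contribution to $p(\widehat{F})$ would require $r$ independent legs along $S$, which is impossible. Hence $p(\widehat{F})$ restricts to zero on $S\times M$, so it is a relative de Rham cocycle and refines $p(q_M^\ast P)$ to a class in $H^{2r}(T\times M,\,S\times M;\,\mathbb{R})$. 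This relative refinement is the essential point: as an \emph{absolute} class one has $p(q_M^\ast P)=q_M^\ast\,p(P)=0$, because $P$ admits a flat connection, so the cap product in the statement must be understood through the relative structure carried by the vanishing of $p(\widehat{F})$ along $S\times M$.

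It then remains to match $\int_T$ with the cap/slant product. In the de Rham model for the product $T\times M$, integration along the $T$-factor of a relative cocycle vanishing on $S\times M$ is exactly the slant product with the relative fundamental class $[T]\in H_k(T,S)$, taking values in $H^{2r-k}(M,\,\mathbb{R})$; written in the notation of the statement this is $p(q_M^\ast P)\cap[T]$. The main obstacle is this last identification in the presence of the boundary: one must make precise that relative fiber-integration over a manifold-with-boundary coincides with the cap product, carefully tracking the vertical boundary $S\times M$ and the associated signs, and verify that the Stokes boundary term drops out. That check is precisely the computation already performed in Lemma~\ref{lem2}, where $\int_S\overline{f}^\ast\beta_{k-1}^p=0$ because $\beta_{k-1}^p$ vanishes along $\mathcal{F}$; this simultaneously guarantees that the relative class is well defined and that $\int_T p(\widehat{F})$ descends to cohomology on $M$ independently of the auxiliary choices of $T$ and $\overline{f}$.
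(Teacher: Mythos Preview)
Your argument is correct and follows the line the paper intends: the paper gives no separate proof of this lemma, stating it as an immediate consequence of Proposition~\ref{alternative}, so you are supplying the details behind that sentence. Your use of $\Lambda_k^p(f)=\int_T p(\widehat{F})$ and the identification of fiber integration along $T$ with the cap/slant product against $[T]$ is exactly what the paper has in mind.

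Where you go beyond the paper is in noticing that, read literally, the absolute class $p(q_M^\ast P)=q_M^\ast p(P)$ vanishes in $H^{2r}(T\times M,\mathbb{R})$ whenever $\mathcal{F}\neq\varnothing$, so the cap product must be interpreted through the relative class in $H^{2r}(T\times M,\,S\times M;\mathbb{R})$ paired with $[T]\in H_k(T,S)$. Your verification that $p(\widehat{F})$ restricts to zero on $S\times M$ (because over $S$ the $TM$--$TM$ part of $\widehat{F}$ vanishes by flatness, while each remaining factor of $\widehat{F}$ carries one $TS$-leg, forcing $r>k-1$ legs into a $(k-1)$-dimensional space) is the right mechanism to produce that relative refinement. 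The paper glosses over this point; note, however, that in the only place the lemma is actually invoked (Proposition~\ref{cob}) the manifold $D$ is closed, so the boundary subtlety does not arise and the paper's looser reading suffices there.
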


Although the construction of $\Lambda_{k}^{p}(f)$ depends a priori
on the choice of $T$ and the extension $\overline{f}$, the
following proposition shows that $\Lambda_{k}^{p}(f)$ is actually
independent of them.

\begin{proposition}
\label{cob}The element $\Lambda_{k}^{p}(f)\,\in\, H^{2r-k}(M,\,
\mathbb{R})$ does not depend on either the choice of $T$ or the
choice of the extension $\overline{f}
\,:\,T\,\longrightarrow\,\mathcal{A}$.
\end{proposition}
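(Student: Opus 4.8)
The plan is to show that any two admissible choices $(T_1,\overline{f}_1)$ and $(T_2,\overline{f}_2)$ produce representatives of $\Lambda_k^p(f)$ that differ by an exact form on $M$. Since $\overline{f}_1\vert_S=f=\overline{f}_2\vert_S$, I would glue $T_1$ to $T_2$ along their common boundary $S$, reversing the orientation of $T_2$, to obtain a closed oriented $k$-manifold $W=T_1\cup_S\overline{T_2}$ together with a smooth map $\overline{f}\colon W\to\mathcal{A}$ restricting to $\overline{f}_1$ and $\overline{f}_2$ on the two pieces. Arranging the two extensions to be product-like near $S$ guarantees that $\overline{f}$ is smooth across the seam. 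With these conventions the difference of the two representatives is precisely
\[
\int_{T_1}\overline{f}_1^{\ast}\beta_k^p-\int_{T_2}\overline{f}_2^{\ast}\beta_k^p=\int_W\overline{f}^{\ast}\beta_k^p\, ,
\]
so it suffices to prove that this integral is exact in $\Omega^{2r-k}(M)$.

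The crucial point is that closedness alone, which follows as in Lemma \ref{lem2} using $\partial W=\varnothing$, is not enough: to upgrade \emph{closed} to \emph{exact} I would exploit the contractibility of the affine space $\mathcal{A}$. Choose the straight-line homotopy $H\colon W\times[0,1]\to\mathcal{A}$ with $H_0=\overline{f}$ and $H_1$ a constant map. Since pullback commutes with the exterior differential on $\mathcal{A}$ and $\partial W=\varnothing$, Stokes' theorem on $W\times[0,1]$ reduces the boundary integral of $H^{\ast}\beta_k^p$ to its two ends; the end $W\times\{1\}$ contributes nothing because $H_1$ is constant and $k>0$, so that (up to an immaterial sign)
\[
\int_W\overline{f}^{\ast}\beta_k^p=\int_{W\times[0,1]}H^{\ast}(d_{\mathcal{A}}\beta_k^p)\, .
\]

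Now I would invoke Proposition \ref{Prop1}, which identifies $d_{\mathcal{A}}\beta_k^p=(r-k)\,d\circ\beta_{k+1}^p$ as a form valued in exact forms on $M$. Because the de Rham differential $d$ on $M$ does not touch the $W\times[0,1]$ directions, it commutes with fibre integration, and hence
\[
\int_W\overline{f}^{\ast}\beta_k^p=(r-k)\,d\!\left(\int_{W\times[0,1]}H^{\ast}\beta_{k+1}^p\right)\in B^{2r-k}(M)\, .
\]
For $k=r$ the factor $r-k$ vanishes and the difference is identically zero, consistent with the remark that $d_{\mathcal{A}}\beta_r^p=0$. In all cases $k\leq r$ the two representatives are cohomologous, proving independence of both $T$ and $\overline{f}$.

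I expect the main obstacle to be the bookkeeping in the gluing step, namely ensuring smoothness of $\overline{f}$ across $S$ and tracking the orientation signs so that the difference really equals $\int_W\overline{f}^{\ast}\beta_k^p$, together with the conceptual point that one genuinely needs contractibility of $\mathcal{A}$ rather than mere closedness. As a cross-check, I would note a more topological route that bypasses the homotopy: by Lemma \ref{lem3} the difference equals the slant product $p(q_M^{\ast}P)\cap[W]$ for the closed manifold $W$, and since $\mathcal{F}\neq\varnothing$ the real characteristic class $p(P)\in H^{2r}(M,\mathbb{R})$ vanishes; as $p(q_M^{\ast}P)=q_M^{\ast}p(P)=0$, the difference vanishes in cohomology by the Künneth formula. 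I would present the first argument as the main proof and record the second as confirmation.
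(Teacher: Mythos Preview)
Your proposal is correct. Interestingly, what you present as a ``cross-check'' is essentially the paper's own proof: the paper glues $T_1$ and $-T_2$ into a closed manifold $D$, invokes Lemma~\ref{lem3} to identify the difference with $p(q_M^{\ast}P)\cap[D]$, and concludes from $p(q_M^{\ast}P)=q_M^{\ast}p(P)$ that this slant product vanishes. (The paper does not even appeal to $p(P)=0$: a class pulled back from the $M$-factor has zero slant product with any positive-dimensional cycle in $D$, though your observation that $\mathcal{F}\neq\varnothing$ forces $p(P)=0$ is an equally valid shortcut.)

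Your main argument, by contrast, takes a genuinely different route. Rather than passing through the characteristic-class interpretation of Lemma~\ref{lem3} and Proposition~\ref{alternative}, you exploit the affine structure of $\mathcal{A}$ to contract $\overline{f}$ on $W$ to a constant, apply Stokes on $W\times[0,1]$, and then feed in Proposition~\ref{Prop1} to exhibit the difference as $(r-k)\,d\!\int_{W\times[0,1]}H^{\ast}\beta_{k+1}^{p}$, an explicit exact form. This is more self-contained---it stays entirely within the calculus of the forms $\beta_k^p$ and never leaves the differential-form level---and it yields a concrete primitive. The paper's approach is shorter and makes the geometric reason for the vanishing transparent (the characteristic class lives purely in the $M$-direction). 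You have, in effect, swapped the roles of main proof and confirmation relative to the paper.
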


\begin{proof}
Let $\overline{f}_{1}\,:\,T_{1}\,\longrightarrow\,\mathcal{A}$ and
$\overline{f}_{2} \,:\,T_{2}\,\longrightarrow\,\mathcal{A}$ be two
extensions. We consider
$$
D :=\, T_{1}\cup_{S}(-T_{2})\, .
$$
(the orientation of $T_2$ is reversed) and we glue the maps
$\overline{f}_{1}$ and $\overline{f}_{2}$ to a map $\widehat{f}
\,:\,D \,\longrightarrow\, \mathcal{A}$. The $(2r-k)$--form on $M$
\[
\theta\, :=\, \int_{D}\widehat{f}^{\ast}\beta_{k}^{p}=\int _{T_1}
\overline{f}_1^*\beta ^p_k-\int _{T_2}\overline{f}_2 \beta ^P_k
\]
is closed by setting $T$ in Lemma \ref{lem2} to be $D$, because
the boundary of $D$ is empty. As in Lemma \ref{lem3}, the
cohomology class $\widetilde{\theta}\, \in\, H^{2r-k}(M,\,
\mathbb{R})$ given by the form $\theta$ is the cap product $(p
(q^*_M P))\cap [D]\, \in\, H^{2r-k}(M,\, \mathbb{R})$. But
$$
p (q^*_M P)\,=\, q^*_M p (P)\, ,
$$
so $(p (q^*_M P))\cap [D]\,=\, 0$. Therefore, we have
$\widetilde{\theta}\,=\, 0$. The proposition follows from this.
\end{proof}

\begin{proposition}
\label{hoty} Fix $k\,<\,r$. Let $f_{1}\,:\,S\,\longrightarrow\,\mathcal{F}$ and $f_{2}
\,:\,S\,\longrightarrow\,\mathcal{F}$ be homologous maps in $\mathcal{F}$. Then
\[
\Lambda_{k}^{p}(f_{1})\,=\, \Lambda_{k}^{p}(f_{2})
\]
in $H^{2r-k}(M,\,\mathbb{R})$.
\end{proposition}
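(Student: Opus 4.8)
The plan is to reduce the statement to the single fact, already contained in Lemma \ref{lem2.1}, that the form $\beta_k^p$ vanishes identically along $\mathcal{F}$ whenever $k<r$. Indeed, if $A\in\mathcal{F}$ then $F^A=0$, and by Lemma \ref{lem2.1}
\[
\beta_k^p(A)(\xi_1,\dots,\xi_k)=N_{r,k}\,p(\xi_1,\dots,\xi_k,F^A,\overset{(r-k)}{\cdots},F^A);
\]
since $k<r$ there is at least one entry equal to $F^A=0$, so by multilinearity the right-hand side vanishes. It is here, and only here, that the hypothesis $k<r$ enters: for $k=r$ the form $\beta_r^p$ is the nonzero constant $r!\,p(\xi_1,\dots,\xi_r)$ on all of $\mathcal{A}$, and the conclusion is no longer expected.

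Next I would make the homology hypothesis explicit. Writing $S_1,S_2$ for the two copies of $S$ carrying $f_1$ and $f_2$, the relation ``$f_1$ homologous to $f_2$ in $\mathcal{F}$'' furnishes a compact oriented $k$-manifold $W$ with $\partial W=S_1\sqcup(-S_2)$ and a smooth map $g\colon W\to\mathcal{F}$ restricting to $f_1$ on $S_1$ and to $f_2$ on $S_2$. I would then reuse a single set of cobordism data for $f_1$: fix $(T_1,\overline{f_1})$ with $\partial T_1=S_1$ and $\overline{f_1}\colon T_1\to\mathcal{A}$ computing $\Lambda_k^p(f_1)$, and glue $T_1$ to $-W$ along $S_1$, using collar neighbourhoods to smooth the gluing. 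The orientations match so that $T_2':=T_1\cup_{S_1}(-W)$ is a compact oriented $k$-manifold with $\partial T_2'=S_2$, and the maps patch to an extension $\overline{f}_2':=\overline{f_1}\cup g\colon T_2'\to\mathcal{A}$ with $\overline{f}_2'|_{S_2}=f_2$. Thus $(T_2',\overline{f}_2')$ is admissible data for $f_2$.

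By Proposition \ref{cob}, the class $\Lambda_k^p(f_2)$ does not depend on the chosen cobordism or extension, so it may be computed from $(T_2',\overline{f}_2')$:
\[
\Lambda_k^p(f_2)=\Big[\int_{T_2'}(\overline{f}_2')^{\ast}\beta_k^p\Big]=\Big[\int_{T_1}\overline{f_1}^{\ast}\beta_k^p-\int_{W}g^{\ast}\beta_k^p\Big].
\]
The second integrand is $g^{\ast}\beta_k^p$, which is zero by the first paragraph because $g$ takes values in $\mathcal{F}$ and $k<r$; hence $\int_W g^{\ast}\beta_k^p=0$, and the remaining term is exactly a representative of $\Lambda_k^p(f_1)$. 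This yields $\Lambda_k^p(f_1)=\Lambda_k^p(f_2)$ in $H^{2r-k}(M,\mathbb{R})$. Symmetrically, one could instead glue $T_1$, $-W$ and $-T_2$ into a closed oriented $k$-manifold $Z$ with patched map $\widehat f$; then $\int_Z\widehat f^{\ast}\beta_k^p$ equals $\Lambda_k^p(f_1)-\Lambda_k^p(f_2)$ at the level of representatives, while its class vanishes by the closed-manifold argument of Proposition \ref{cob}. I expect the only genuine obstacle to be the topological bookkeeping, namely realizing the homology by an oriented cobordism $W$, smoothing the gluing along $S$, and tracking orientations so that the two copies of $S$ cancel; should the homology be given only by a singular $k$-chain in $\mathcal{F}$, the same computation goes through verbatim, since the offending integrand $g^{\ast}\beta_k^p$ is identically zero.
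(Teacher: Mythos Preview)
Your argument is correct, and it rests on the same key observation as the paper's: for $k<r$ the form $\beta_k^p$ vanishes identically along $\mathcal{F}$, so the integral over the piece lying in $\mathcal{F}$ contributes nothing. Where you differ is in the packaging. The paper forms the $k$-cycle $D\cup(-\overline f_1(T))\cup\overline f_2(T)$ inside the contractible space $\mathcal{A}$, bounds it by a $(k+1)$-chain $C$, and then applies Stokes together with Proposition~\ref{Prop1} to see that $\Lambda_k^p(f_1)-\Lambda_k^p(f_2)=(r-k)\,d\!\int_C\beta_{k+1}^p$ is $d$-exact on $M$. You instead glue the homology $W$ to the given data $(T_1,\overline f_1)$ to manufacture admissible data $(T_2',\overline f_2')$ for $f_2$, and then invoke Proposition~\ref{cob} as a black box; the $W$-piece drops out because $g^*\beta_k^p=0$.

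Your route is slightly more economical: it avoids the auxiliary $(k+1)$-chain $C$ and the appeal to Proposition~\ref{Prop1}, using only results already established. The paper's route, on the other hand, works uniformly at the level of singular chains and does not require smoothing a glued manifold along $S_1$; your final remark that the computation ``goes through verbatim'' for a singular chain $W$ is correct at the level of the vanishing integrand, but then $T_2'$ is no longer a manifold and Proposition~\ref{cob} is not literally applicable---in that case your alternative closed-cycle argument (your $Z$) is the right fallback, and it is essentially the paper's argument.
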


\begin{proof}
Let $D\subset \mathcal{F}$ be a $k$-dimensional chain such that $\partial D =f_1(S)\cup (-f_2(S))$. The (oriented) set $D\cup (-\overline{f}_1(T))\cup \overline{f}_2(T)$ is a cycle, and hence it is a border of a $(k+1)$-chain $C$ in the affine space $\mathcal{A}$. Then we have
\[
\int _C d_{\mathcal{A}} \beta ^p _k = \int _D \beta ^p_k - \int _T \overline{f}_1 ^* \beta ^p _k + \int _T \overline{f}_2 ^* \beta p _k
= -\Lambda ^p _k (f_1) + \Lambda ^p _k (f_2)
\]
where the integral along $D$ vanishes as $\beta ^p _k$ restricts to $0$ on $\mathcal{F}$ if $k<r$. Hence, $\Lambda ^p _k (f_1)$ and $\Lambda ^p_k (f_2)$ define the same element in $H^{2r-k}(M,\, \mathbb{R})$ because using Proposition \ref{Prop1},
\[
\int _C d_\mathcal{A}\beta ^p _k = (r-k) \int _C d\circ \beta^p _{k+1} = (r-k)d\circ \int_C \beta ^p _{k+1} .
\]
\end{proof}

\subsection{Gauge invariance and the moduli space}

Let $C(S,\mathcal{F})$ denote the space of all smooth maps from
$S$ to $\mathcal{F}$. Recall that a gauge transformations $\Phi
\in \mathrm{Gau}P$ induces a transformation $\Phi _\mathcal{A}:
\mathcal{A}\to \mathcal{A}$ in the space of principal connections.
In particular, this action leaves the subset $\mathcal{F}$ of flat
connection invariant.

\begin{proposition}\label{globalgauge}
The mapping
\[
\Lambda^{p}_k\,:\,C(S,\mathcal{F})\,\longrightarrow\, H^{2r-k}(M,\,\mathbb{R})
\]
is invariant under the action of the gauge group in $\mathcal{F}$, that is,
for every $\Phi\in\mathrm{Gau}P$ and $f\,\in\, C(S,\mathcal{F})$,
\[
\Lambda^{p}_k(\Phi\cdot f)\,=\, \Lambda^{p}_k(f)\, ,
\]
where $\Phi \cdot f \in C(S,\mathcal{F})$ is defined as
$(\Phi\cdot f) (x) = \Phi _\mathcal{A} (f(x))$, $x\in S$.
\end{proposition}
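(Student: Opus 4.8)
The plan is to reduce the statement entirely to the pointwise gauge invariance of the form $\beta_{k}^{p}$ on $\mathcal{A}$ proved in Proposition \ref{inv}, combined with the independence of the construction from the choice of extension established in Proposition \ref{cob}. The crucial structural observation is that a fixed gauge transformation $\Phi$, acting by the same diffeomorphism $\Phi_{\mathcal{A}}$ at every point of $S$, commutes with the operation of passing from $f$ to a bounding extension $\overline{f}$.

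First I would fix an extension $\overline{f}\,:\,T\,\longrightarrow\,\mathcal{A}$ of $f$, with $\partial T\,=\,S$ and $\overline{f}\vert_{S}\,=\,f$, so that $\Lambda_{k}^{p}(f)\,=\,\int_{T}\overline{f}^{\ast}\beta_{k}^{p}$. Since $\Phi_{\mathcal{A}}\,:\,\mathcal{A}\,\longrightarrow\,\mathcal{A}$ is a (globally defined) diffeomorphism of the affine space $\mathcal{A}$, namely the automorphism induced by $\Phi_{C}$ in \eqref{pc}, the composition $\Phi_{\mathcal{A}}\circ\overline{f}\,:\,T\,\longrightarrow\,\mathcal{A}$ is again a smooth map, and its restriction to $S$ equals $\Phi_{\mathcal{A}}\circ f\,=\,\Phi\cdot f$. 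Hence $\Phi_{\mathcal{A}}\circ\overline{f}$ is a legitimate extension of $\Phi\cdot f$ over the very same bounding manifold $T$.

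Next I would invoke Proposition \ref{cob} to the effect that the class $\Lambda_{k}^{p}(\Phi\cdot f)$ is independent of the extension used, so that it may be computed with the particular extension $\Phi_{\mathcal{A}}\circ\overline{f}$. By functoriality of pull-back this yields
\[
\Lambda_{k}^{p}(\Phi\cdot f)\,=\,\int_{T}(\Phi_{\mathcal{A}}\circ\overline{f})^{\ast}\beta_{k}^{p}\,=\,\int_{T}\overline{f}^{\ast}(\Phi_{\mathcal{A}}^{\ast}\beta_{k}^{p})\, ,
\]
and applying the gauge invariance $\Phi_{\mathcal{A}}^{\ast}\beta_{k}^{p}\,=\,\beta_{k}^{p}$ from Proposition \ref{inv}, the right-hand side reduces to $\int_{T}\overline{f}^{\ast}\beta_{k}^{p}\,=\,\Lambda_{k}^{p}(f)$. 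Thus the two representative forms coincide outright, and a fortiori so do their cohomology classes.

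I expect no genuine obstacle, since the substantive work is already carried out in Propositions \ref{inv} and \ref{cob}; the only point demanding a moment of care is confirming that $\Phi_{\mathcal{A}}\circ\overline{f}$ is a valid extension, which rests on $\Phi_{\mathcal{A}}$ being one globally defined diffeomorphism of $\mathcal{A}$. It is precisely this feature — that a single $\Phi$ acts uniformly over all of $S$ — that makes the argument go through, and that distinguishes the present \emph{global} gauge action from the $s$-dependent action $\Phi_{s}$ flagged in the introduction, for which the invariance is not expected to hold.
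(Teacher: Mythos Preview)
Your proof is correct and follows essentially the same approach as the paper: the paper's proof is the one-line remark that the result is a direct consequence of Proposition \ref{inv} together with the gauge invariance of $\mathcal{F}$, and your argument simply spells this out in detail by exhibiting $\Phi_{\mathcal{A}}\circ\overline{f}$ as an extension of $\Phi\cdot f$ and applying $\Phi_{\mathcal{A}}^{\ast}\beta_{k}^{p}=\beta_{k}^{p}$. The appeal to Proposition \ref{cob} is not strictly needed since you in fact obtain equality of the representative forms themselves, but it does no harm.
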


\begin{proof}
It is a direct consequence of Proposition \ref{inv} and the fact
that $\mathcal{F}$ is invariant under $\Phi_\mathcal{A}$.
\end{proof}

The following proposition studies the behavior of $\Lambda^{p}_k$
under pointwise gauge transformations for invariant polynomials
defining integral characteristic classes. By pointwise we mean
that the gauge transformation is not fixed and may depend on the
$S$.

\begin{proposition}\label{Propg}
Take any $\bar{\Phi}\,:\,S\,\longrightarrow\,\mathrm{Gau}P$. For
any two maps $f_{1}\,:\,S\,\longrightarrow\, \mathcal{F}$ and
$f_{2}\,:\,S\,\longrightarrow \,\mathcal{F}$ with
\[
f_{2}(x)\, =\, (\bar{\Phi}(x))_\mathcal{A}(f_{1}(x))\, ,
\]
the following holds:
\[
\Lambda^{p}_k(f_{1})\,=\, \Lambda^{p}_k(f_{2})\ \ \ {\rm mod}\ \ H^{2r-k}
(M,\, \mathbb{Z})\, ,
\]
that is, the invariant polynomial $p$ produces characteristic
class for $G$-bundles with values in integral cohomology classes.
\end{proposition}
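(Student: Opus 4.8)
The plan is to reduce the statement to the integrality of a single characteristic number, by gluing the two auxiliary bundles over $T_1\times M$ and $T_2\times M$ into one bundle over a \emph{closed} manifold twisted by $\overline{\Phi}$. Throughout I would work with the description in Proposition~\ref{alternative}: choosing extensions $\overline{f}_i\,:\,T_i\,\to\,\mathcal{A}$ of $f_i$ (whose existence and irrelevance to the final class are guaranteed by Proposition~\ref{cob}), one has $\Lambda_k^p(f_i)=\int_{T_i}p(\widehat{F}_i)$, where $\widehat{A}_i$ is the canonical connection on $T_i\times P\to T_i\times M$.

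The first step is to turn the pointwise gauge transformation $\overline{\Phi}\,:\,S\,\to\,\mathrm{Gau}P$ into a clutching datum. It defines a bundle automorphism $\widehat{\Phi}$ of $S\times P\to S\times M$ by $\widehat{\Phi}(s,p)=(s,(\overline{\Phi}(s))(p))$. Using $\widehat{\Phi}$ to identify the boundaries, I glue $T_1\times P$ and $T_2\times P$ into a principal $G$-bundle $\widetilde{P}\to D\times M$ over the closed oriented $k$-manifold $D:=T_2\cup_S(-T_1)$. The hypothesis $f_2(s)=(\overline{\Phi}(s))_{\mathcal{A}}(f_1(s))$ is exactly what guarantees that the horizontal lifts of $T_xM$ prescribed by $\widehat{A}_1$ and $\widehat{A}_2$ correspond under $\widehat{\Phi}$ along $S$; note, however, that the resulting bundle $\widetilde{P}$ is genuinely twisted, so $p(\widetilde{P})\neq q_M^*p(P)$ in general — this is precisely what makes the present situation differ from Proposition~\ref{cob}.

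The second step records the value. Picking any connection on $\widetilde{P}$ and integrating its characteristic form over the closed manifold $D$, the same slant-product computation as in Lemma~\ref{lem3} (applied to $D$, cf. Proposition~\ref{cob}) gives
\[
\int_D p(\widetilde{F})\,=\,p(\widetilde{P})\cap[D]\,\in\, H^{2r-k}(M,\,\mathbb{R})\, .
\]
Since by hypothesis $p$ defines an \emph{integral} characteristic class, $p(\widetilde{P})$ lies in the image of $H^{2r}(D\times M,\,\mathbb{Z})$, and its slant product with the integral fundamental class $[D]\in H_k(D,\,\mathbb{Z})$ lands in the image of $H^{2r-k}(M,\,\mathbb{Z})$. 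The congruence to be proved thus follows once $\int_D p(\widetilde{F})$ is identified with $\Lambda_k^p(f_2)-\Lambda_k^p(f_1)$ modulo integral classes.

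The main obstacle is precisely this identification, and it is where the phenomenon announced in the introduction enters: the canonical connections $\widehat{A}_1$ and $\widehat{A}_2$ do \emph{not} glue smoothly across $S$. Indeed, since $\overline{\Phi}$ varies with $s$, the gauge transform $\widehat{\Phi}\cdot\widehat{A}_1$ differs from $\widehat{A}_2$ along $S$ by the $S$-directional Maurer-Cartan term $\gamma^{-1}d_S\gamma$ of the gauge function $\gamma$ of $\overline{\Phi}$. I would absorb this by taking the connection on $\widetilde{P}$ to equal $\widehat{A}_1$ over $T_1$ and to interpolate, inside a collar of $S$ in $T_2$, from $\widehat{A}_2$ to the clutched connection $\widehat{\Phi}\cdot\widehat{A}_1$. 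By the transgression formula of the Remark following Proposition~\ref{alternative} together with Stokes' theorem, the discrepancy between $\int_D p(\widetilde{F})$ and $\Lambda_k^p(f_2)-\Lambda_k^p(f_1)$ reduces to the boundary integral $\int_S\vartheta(\widehat{\Phi}\cdot\widehat{A}_1,\widehat{A}_1)$ up to the collar correction. The decisive point is that this term is itself integral: pairing it with any closed $(2r-k)$-cycle $\Sigma\subset M$ yields $\int_{S\times\Sigma}\vartheta(\widehat{\Phi}\cdot\widehat{A}_1,\widehat{A}_1)$, the Chern-Simons transgression between a connection and its gauge transform over the closed $(2r-1)$-manifold $S\times\Sigma$, which is an integer when $p$ is integral. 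Checking that the collar interpolation — equivalently, the purely $S$-directional mismatch $\gamma^{-1}d_S\gamma$ — contributes no further non-integral period is the technical heart of the argument.
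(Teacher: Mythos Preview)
Your overall strategy coincides with the paper's: build a principal $G$-bundle $\widetilde{P}$ over the closed manifold $D\times M$ by clutching two copies of $T\times P$ along $S$ via $\widehat{\Phi}$, pair its integral characteristic class with $[D]$, and identify the result with $\Lambda_k^p(f_1)-\Lambda_k^p(f_2)$ evaluated on cycles $C\subset M$. The paper's proof is in fact very brief here; it simply asserts that ``the connections of the two copies of $G$-bundles glue compatibly'' and reads off the integrality. You are right to flag that this assertion is not literally true: since $\overline{\Phi}$ varies along $S$, the clutched connection $\widehat{\Phi}\cdot\widehat{A}_1$ differs from $\widehat{A}_2$ on $S\times M$ by the $S$-directional Maurer--Cartan term.

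However, your proposed cure misidentifies the relevant correction. After smoothing in a collar of $S$ in $T_2$, the discrepancy between $\int_D p(\widetilde{F})$ and $\Lambda_k^p(f_2)-\Lambda_k^p(f_1)$ is (up to sign) $\int_S\vartheta(\widehat{\Phi}\cdot\widehat{A}_1,\widehat{A}_2)$, \emph{not} $\int_S\vartheta(\widehat{\Phi}\cdot\widehat{A}_1,\widehat{A}_1)$. Your Chern--Simons integrality argument applies to the latter (a connection versus its own gauge transform on a closed $(2r-1)$-manifold) but says nothing about the former, so it does not close the gap you have isolated; and you then leave the residual ``collar contribution'' as an unverified technical point.

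What you are missing is that $\int_S\vartheta(\widehat{\Phi}\cdot\widehat{A}_1,\widehat{A}_2)$ actually \emph{vanishes}, by a degree count exploiting flatness. On $S\times M$ the difference $\alpha:=\widehat{\Phi}\cdot\widehat{A}_1-\widehat{A}_2$ has only $S$-legs, and both connections restrict in the $M$-direction to the flat connection $f_2(s)$; hence the interpolating curvatures $F_t$ have zero $(M,M)$-component and carry at most one $M$-leg each. Consequently the transgression integrand $p(\alpha,F_t,\ldots,F_t)$ has at most $r-1$ legs in $M$, whereas pairing with a $(2r-k)$-cycle $C\subset M$ requires $2r-k\geq r$ of them (since $k\leq r$). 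The collar term is therefore identically zero, the piecewise-defined characteristic form already integrates to the characteristic number, and the paper's terse ``glue compatibly'' is justified without any Chern--Simons detour.
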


\begin{proof}
Take any $T$ with $\partial T\,=\,S$, and also take two extensions $\overline{f}_{1}
\,:\,T\,\longrightarrow\, \mathcal{A}$ and $\overline{f}_{2}\,:\,T
\,\longrightarrow\,\mathcal{A}$ of $f_{1}$
and $f_{2}$ respectively. We consider two copies of the principal $G$-bundle
\[
\overline{P}\,=\,T\times P\,\longrightarrow\, T\times M
\]
equipped with the following connections: both the connections are
trivial along the direction of $T$, and the connection
$\overline{f}_{1}(z)$ (respectively, $\overline{f}_{2}(z)$) on $P$
coincides the connection on $\{z\}\times
P\,\longrightarrow\,\{z\}\times M$, $z\,\in\, T$, in the first
(respectively, second) copy. The orientation of the second $T$ is
reversed so that we can glue along the boundary to get an oriented
manifold $D\,=\,T\cup_{S}T$. As for the $G$-bundles, we glue the
two copies of $\overline{P}$ using $\bar{\Phi}$, that is, $(z\,
,p)\,\in\, S\times P$ of the first copy is glued with $(z\,
,\bar{\Phi}(z)(p))\,\in \, S\times P$ of the second copy. The
connections of the two copies of $G$-bundles glue compatibly to
give a connection on the $G$-bundle over $D\times M$. We consider
any $(2r-k)$-dimensional cycle $C\,\subset\, M$, and the
restriction of the new bundle to $D\times C$. The integral of the
characteristic class gives an integer, but this integral is
precisely
\[
\Lambda^{p}_k(f_{1})(C)-\Lambda^{p}_k(f_{2})(C)\, ,
\]
and the proof is complete.
\end{proof}

\begin{notation}
{\rm Let $[C(S,\,\mathcal{F}/\mathrm{Gau}P)]_{\mathrm{lift}}$ be the homotopy classes of maps from
$S$ the quotient space $\mathcal{F}/\mathrm{Gau}P$ that lift to a map from $S$
to $\mathcal{F}$.}
\end{notation}

\begin{corollary}
There is a well defined map
\[
[ C(S,\,\mathcal{F}/\mathrm{Gau}P)]_{\mathrm{lift}}\,\longrightarrow\, H^{2r-k}(M,\,\mathbb{R}
)/H^{2r-k}(M,\,\mathbb{Z})\, .
\]
\end{corollary}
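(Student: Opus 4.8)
The plan is to define the map on a chosen representative and then check that the resulting class in $H^{2r-k}(M,\,\mathbb{R})/H^{2r-k}(M,\,\mathbb{Z})$ is insensitive to all the choices, the two required invariances being furnished respectively by Proposition \ref{Propg} (pointwise gauge) and Proposition \ref{hoty} (homology/homotopy). First I would fix a class in $[C(S,\,\mathcal{F}/\mathrm{Gau}P)]_{\mathrm{lift}}$, pick a representative $g\,:\,S\,\longrightarrow\,\mathcal{F}/\mathrm{Gau}P$, and pick a lift $f\,:\,S\,\longrightarrow\,\mathcal{F}$ with $\Pi\circ f\,=\,g$, which exists by the very definition of the subset $[\,\cdot\,]_{\mathrm{lift}}$. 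I would then declare the value of the map to be the reduction of $\Lambda_{k}^{p}(f)\in H^{2r-k}(M,\,\mathbb{R})$ modulo $H^{2r-k}(M,\,\mathbb{Z})$, and restrict to the range $k<r$ in which Proposition \ref{hoty} applies. All that remains is to verify independence from the lift and from the representative.

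For independence from the lift, suppose $f_{1},f_{2}\,:\,S\,\longrightarrow\,\mathcal{F}$ are two lifts of the same $g$. Then for every $x\in S$ the flat connections $f_{1}(x)$ and $f_{2}(x)$ lie in a single $\mathrm{Gau}P$-orbit, so there is a map $\bar{\Phi}\,:\,S\,\longrightarrow\,\mathrm{Gau}P$ with $f_{2}(x)=(\bar{\Phi}(x))_{\mathcal{A}}(f_{1}(x))$. Proposition \ref{Propg} then gives directly $\Lambda_{k}^{p}(f_{1})\equiv\Lambda_{k}^{p}(f_{2})\pmod{H^{2r-k}(M,\,\mathbb{Z})}$, so the class in the quotient does not depend on the chosen lift.

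For independence from the representative, let $g_{0}$ and $g_{1}$ be homotopic in $\mathcal{F}/\mathrm{Gau}P$ through $H\,:\,S\times[0,1]\,\longrightarrow\,\mathcal{F}/\mathrm{Gau}P$, with chosen lifts $f_{0},f_{1}$. I would lift $H$ to $\widetilde{H}\,:\,S\times[0,1]\,\longrightarrow\,\mathcal{F}$ with $\widetilde{H}_{0}=f_{0}$; then $\widetilde{H}_{1}$ is a lift of $g_{1}$ homotopic to $f_{0}$ inside $\mathcal{F}$. Since homotopic maps into $\mathcal{F}$ are homologous, Proposition \ref{hoty} gives the exact equality $\Lambda_{k}^{p}(f_{0})=\Lambda_{k}^{p}(\widetilde{H}_{1})$, while the previous paragraph gives $\Lambda_{k}^{p}(\widetilde{H}_{1})\equiv\Lambda_{k}^{p}(f_{1})$ modulo integral classes, because $\widetilde{H}_{1}$ and $f_{1}$ are two lifts of the same $g_{1}$. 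Chaining these yields $\Lambda_{k}^{p}(f_{0})\equiv\Lambda_{k}^{p}(f_{1})\pmod{H^{2r-k}(M,\,\mathbb{Z})}$, which is the desired well-definedness.

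The hard part will be the lifting steps. The projection $\Pi\,:\,\mathcal{F}\,\longrightarrow\,\mathcal{F}/\mathrm{Gau}P$ need not be a fibre bundle, since the stabilizers of the gauge action — the automorphism groups of the flat connections — may jump along the reducible locus, so neither the smooth dependence of $\bar{\Phi}$ on $x$ nor the homotopy lifting property of $H$ can be invoked blindly. I would resolve both by the same input: constructing local slices for the $\mathrm{Gau}P$-action along the orbits met by $f$ and $H$, which simultaneously produces the smooth family $\bar{\Phi}$ needed in the lift-independence step and the lifted homotopy $\widetilde{H}$. Alternatively, reading the definition of $[\,\cdot\,]_{\mathrm{lift}}$ as restricting to homotopies that themselves lift to $\mathcal{F}$ makes $\widetilde{H}$ available from the outset and renders the homotopy argument immediate, leaving only the local-slice point in the lift-independence step.
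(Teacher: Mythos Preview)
Your approach is correct and shares the paper's key step: lift independence via Proposition~\ref{Propg}. In fact your treatment is more complete than the paper's own proof, which consists of a single sentence invoking Proposition~\ref{Propg} and does not explicitly verify invariance under change of homotopy representative $g$; the paper relegates that point to the Remark following the Corollary, where Proposition~\ref{hoty} is mentioned. Your argument for homotopy invariance (lift the homotopy to $\mathcal{F}$, apply Proposition~\ref{hoty}, then compare the endpoint with the chosen lift of $g_1$ via Proposition~\ref{Propg}) is the natural one and is exactly what that Remark is gesturing at.

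The technical concerns you raise in your final paragraph---smooth dependence of $\bar{\Phi}$ on $x$, and the homotopy lifting property for $\Pi$---are genuine, and the paper does not address them either: its proof simply asserts that Proposition~\ref{Propg} applies, tacitly assuming a smooth $\bar{\Phi}$ exists. So you have not introduced any gap relative to the paper; you have merely been honest about a subtlety the paper passes over. Your proposed resolution via local slices, or alternatively via reading $[\,\cdot\,]_{\mathrm{lift}}$ as requiring liftable homotopies, is reasonable and is the standard way one would make such arguments rigorous.
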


\begin{proof}
Given any $\varphi\,\in\, C(S,\,\mathcal{F}/\mathrm{Gau}P)$, take any map $f\,:\,
S\,\longrightarrow\, \mathcal{F}$ that projects to $\varphi$, and consider the
corresponding $\Lambda^{p}_k(f)$. In view of Proposition \ref{Propg} it does not
depend on the choice of $f$ up to integral cohomology.
\end{proof}

\begin{remark}
Taking into account Proposition \ref{hoty}, the presentation of the previous Corollary could be given for the set of homology classes of maps from $S$ to $\mathcal{F}/\mathrm{Gau}P$ that lift to a map from $S$ to $\mathcal{F}$. We'd rather follow the homotopy formulation instead.
 \end{remark}

\subsection{Application to homotopy groups\label{hom}}

Here, we set $k\,\geq\,2$, $S\,=\,S^{k-1}$ (the $(k-1)$-sphere)
and $T\,=\,\overline{B}^{k}\,\subset\,\mathbb{R}^{k}$ (the unit
closed ball). We also set $2r-k\,\leq\, n$ and $k\,<\,r$ so that
Proposition \ref{hoty} applies. Homotopy classes of maps
\[
f\,:\,S^{k-1}\,\longrightarrow\,\mathcal{F}
\]
are elements of the homotopy group $\pi_{k-1}(\mathcal{F})$. Given an
invariant polynomial $p$ of degree $r$, we thus have a map
\[
\Lambda^{p}\,:\,\pi_{k-1}(\mathcal{F})\,\longrightarrow\, H^{2r-k}(M,\mathbb{R})
\]
given by
\[
\lbrack f]\,\longmapsto\,
\int_{\overline{B}^{k}}\overline{f}^{\ast}\beta_{k}^{p}\, ,
\]
where $\overline{f}\,:\,\overline{B}^{k}\,\longrightarrow\,\mathcal{A}$ is any extension
of $f$.

The following is straight-forward to check.

\begin{proposition}
The above map $\Lambda^{p}\,:\,\pi_{k-1}(\mathcal{F})\,\longrightarrow H^{2r-k}\,
(M,\mathbb{R})$ is a homomorphism of groups.
\end{proposition}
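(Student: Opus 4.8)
The plan is to reduce the homomorphism property $\Lambda^{p}([f_{1}]+[f_{2}])=\Lambda^{p}([f_{1}])+\Lambda^{p}([f_{2}])$ to additivity of the integral $\int_{\overline{B}^{k}}\overline{f}^{\ast}\beta_{k}^{p}$ under a convenient geometric model of the group operation on $\pi_{k-1}(\mathcal{F})$. Fix a basepoint $A_{\ast}\in\mathcal{F}$ and based representatives $f_{1},f_{2}\,:\,(S^{k-1},\ast)\to(\mathcal{F},A_{\ast})$. I would invoke the standard ``two-disk'' description of the sum: choosing two disjoint closed $(k-1)$-disks $D_{1},D_{2}\subset S^{k-1}$, the map $g\,:\,S^{k-1}\to\mathcal{F}$ that equals $A_{\ast}$ on $S^{k-1}\setminus(\mathring{D}_{1}\cup\mathring{D}_{2})$ and, on each $D_{i}$, equals $f_{i}$ read through the collapse $D_{i}/\partial D_{i}\cong S^{k-1}$, represents $[f_{1}]+[f_{2}]$. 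Since $\Lambda^{p}$ depends only on the homotopy class of its argument (Proposition \ref{hoty}, as homotopic maps are homologous) and is independent of the chosen extension (Proposition \ref{cob}), I am free to compute $\Lambda^{p}([f_{1}]+[f_{2}])$ using $g$ together with any single extension $\overline{g}\,:\,\overline{B}^{k}\to\mathcal{A}$.

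Next I would build $\overline{g}$ so that the integral splits. Take a boundary collar $\partial(\overline{B}^{k})\times[0,1]\subset\overline{B}^{k}$ and set $B_{i}:=D_{i}\times[0,1]$, two disjoint $k$-balls meeting $\partial\overline{B}^{k}$ exactly in $D_{i}$. Define $\overline{g}$ to be the constant map $A_{\ast}$ on the complement $W:=\overline{B}^{k}\setminus(\mathring{B}_{1}\cup\mathring{B}_{2})$, and on each $B_{i}\cong\overline{B}^{k}$ to be an arbitrary smooth extension (guaranteed by contractibility of $\mathcal{A}$) of the map on $\partial B_{i}$ that is $A_{\ast}$ on $(\partial D_{i}\times[0,1])\cup(D_{i}\times\{1\})$ and $f_{i}$ on $D_{i}\times\{0\}$; taking $f_{i}\equiv A_{\ast}$ near $\partial D_{i}$ makes the pieces glue smoothly. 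By construction $\overline{g}|_{\partial\overline{B}^{k}}=g$, and the boundary map on each $\partial B_{i}\cong S^{k-1}$ represents $[f_{i}]$. Consequently, using extension-independence and homotopy invariance once more, $\int_{B_{i}}\overline{g}^{\ast}\beta_{k}^{p}=\Lambda^{p}([f_{i}])$ for $i=1,2$, with both balls carrying the orientation induced from $S^{k-1}$ (hence both signs positive).

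It remains to observe that the connecting region contributes nothing: $\overline{g}|_{W}$ is constant, so $\overline{g}^{\ast}\beta_{k}^{p}$ vanishes on $W$ (equivalently, $W$ maps into $\mathcal{F}$, where $\beta_{k}^{p}=0$ because $k<r$ forces at least one factor $F^{A_{\ast}}=0$ in Lemma \ref{lem2.1}), whence $\int_{W}\overline{g}^{\ast}\beta_{k}^{p}=0$. Therefore
\[
\Lambda^{p}([f_{1}]+[f_{2}])=\int_{\overline{B}^{k}}\overline{g}^{\ast}\beta_{k}^{p}=\sum_{i=1,2}\int_{B_{i}}\overline{g}^{\ast}\beta_{k}^{p}+\int_{W}\overline{g}^{\ast}\beta_{k}^{p}=\Lambda^{p}([f_{1}])+\Lambda^{p}([f_{2}]).
\]
The only genuinely delicate point is arranging the decomposition $\overline{B}^{k}=B_{1}\cup B_{2}\cup W$ so that simultaneously (a) the boundary restriction $\overline{g}|_{\partial\overline{B}^{k}}$ realizes the homotopy-group sum and (b) the ``neck'' $W$ carries no $\beta_{k}^{p}$-mass; item (a) is the classical disk model of addition in $\pi_{k-1}$, while (b) is immediate once $W$ is routed through a single flat connection, using $k<r$. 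Everything else is bookkeeping of orientations, for which the symmetric role of $D_{1}$ and $D_{2}$ guarantees equal signs and hence a homomorphism, factoring through the abelianization when $k=2$.
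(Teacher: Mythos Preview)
Your argument is correct: the two-disk model of addition in $\pi_{k-1}$, combined with an extension $\overline{g}$ that is constant equal to $A_{\ast}$ outside the two sub-balls $B_{1},B_{2}$, cleanly splits the defining integral, and the constant region contributes nothing since the pullback of a positive-degree form under a constant map vanishes (your alternative reason, that $\beta_{k}^{p}$ vanishes along $\mathcal{F}$ for $k<r$, also works). The invocations of Proposition~\ref{cob} and Proposition~\ref{hoty} to identify $\int_{B_{i}}\overline{g}^{\ast}\beta_{k}^{p}$ with $\Lambda^{p}([f_{i}])$ are appropriate, since $\overline{g}\vert_{\partial B_{i}}$ indeed lands in $\mathcal{F}$ and represents $[f_{i}]$.

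There is nothing to compare against: the paper offers no proof of this proposition beyond the sentence ``The following is straight-forward to check.'' Your write-up supplies exactly the kind of direct verification the authors presumably had in mind. The only place you could tighten the exposition is the orientation bookkeeping you flag at the end; rather than appealing to symmetry between $D_{1}$ and $D_{2}$ (which gives equal signs but not a priori the sign $+1$), it is cleaner to note that specializing to $f_{2}\equiv A_{\ast}$ forces the common sign to be $+1$, or simply to fix the collapse maps $D_{i}/\partial D_{i}\cong S^{k-1}$ so that the induced boundary orientation on $\partial B_{i}$ matches the standard one.
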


\section{Invariants in Dolbeault cohomology}

In this section we assume that $M$ is a compact complex manifold and $G$ is a
complex Lie group. Let
$$\rho^{r,s}\,:\,\Omega^{m}(M)\,\longrightarrow\,\Omega^{r,s}(M)\, ,\ \ r+s\,=\,m$$
be the projections associated to the decomposition
\[
\Omega^{m}(M)\,=\, \bigoplus_{r+s=m}
\Omega^{r,s}(M)
\]
of complex $m$-forms on $M$. Given an invariant polynomial
$p\,\in\, S^{r}(\mathfrak{g}^{\ast})^{G}$
of degree $r$ and an integer $0\,\leq\, k\,\leq \,r$, we define
\[
\widetilde{\beta}_{k}^{p}\,\in\,\Omega^{k}(\mathcal{A},\,\Omega^{0,2r-k}(M))
\]
as the $(0,2r-k)$-component of the form $\beta_{k}^{p}$ constructed in (\ref{beta}),
more precisely,
\[
\widetilde{\beta}_{k}^{p}\,=\,\rho^{0,2r-k}\circ\beta_{k}^{p}\, .
\]

\begin{proposition}
\label{PropD1}
The following holds:
\[
d_{\mathcal{A}}\widetilde{\beta}_{k}^{p}\,=\,(r-k)\cdot\overline{\partial}\circ\widetilde{\beta}
_{k+1}^{p}\, ,
\]
where
\[
d_{\mathcal{A}}\,:\,\Omega^{k}(\mathcal{A},\Omega^{2r-k}(M))\,\longrightarrow\,
\Omega^{k+1}(\mathcal{A},\Omega^{2r-k}(M))
\]
is the differential of forms on $\mathcal{A}$ taking values in the vector
space $\Omega^{2r-k}(M)$, and
\[
\overline{\partial}\,:\,\Omega^{0,2r-k-1}(M)\,\longrightarrow\,\Omega^{0,2r-k}(M)
\]
is the Dolbeault differential on $M$.
\end{proposition}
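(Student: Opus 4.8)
The plan is to reduce Proposition \ref{PropD1} to the real-coefficient identity already established in Proposition \ref{Prop1} by applying the projection $\rho^{0,2r-k}$ and then commuting it with the relevant differentials. The starting point is the formula
\[
d_{\mathcal{A}}\beta_{k}^{p}\,=\,(r-k)\,d\circ\beta_{k+1}^{p}
\]
from Proposition \ref{Prop1}. Since $\widetilde{\beta}_{k}^{p}\,=\,\rho^{0,2r-k}\circ\beta_{k}^{p}$, the operator $d_{\mathcal{A}}$ differentiates only in the $\mathcal{A}$-directions and acts trivially on the fixed vector space $\Omega^{2r-k}(M)$; in particular $d_{\mathcal{A}}$ commutes with the (constant, linear) projection $\rho^{0,2r-k}$. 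Hence $d_{\mathcal{A}}\widetilde{\beta}_{k}^{p}\,=\,\rho^{0,2r-k}\,d_{\mathcal{A}}\beta_{k}^{p}\,=\,(r-k)\,\rho^{0,2r-k}(d\circ\beta_{k+1}^{p})$, and everything reduces to analyzing the single composite $\rho^{0,2r-k}\circ d$ applied to $\beta_{k+1}^{p}$.

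The key step is therefore the bidegree bookkeeping for $\rho^{0,2r-k}\circ d$. First I would recall that $\beta_{k+1}^{p}(A)$ takes values in $\Omega^{2r-k-1}(M)$, and decompose this target by bidegree as $\bigoplus_{a+b=2r-k-1}\Omega^{a,b}(M)$. Splitting $d\,=\,\partial+\overline{\partial}$, the $(0,2r-k)$-component of $d\omega$ can only receive contributions from the $(0,2r-k-1)$-part of $\omega$: indeed $\overline{\partial}$ raises the antiholomorphic degree by one while preserving the holomorphic degree, so $\rho^{0,2r-k}\circ\overline{\partial}$ annihilates every bidegree component of $\omega$ except $\Omega^{0,2r-k-1}(M)$, and $\rho^{0,2r-k}\circ\partial$ vanishes identically since $\partial$ produces forms of holomorphic degree at least one. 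This yields $\rho^{0,2r-k}\circ d\,=\,\overline{\partial}\circ\rho^{0,2r-k-1}$ on $\Omega^{2r-k-1}(M)$. Applying this to $\beta_{k+1}^{p}$ and using $\rho^{0,2r-k-1}\circ\beta_{k+1}^{p}\,=\,\widetilde{\beta}_{k+1}^{p}$ gives exactly $(r-k)\,\overline{\partial}\circ\widetilde{\beta}_{k+1}^{p}$, completing the identity.

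The main obstacle is making the commutation identity $\rho^{0,2r-k}\circ d\,=\,\overline{\partial}\circ\rho^{0,2r-k-1}$ rigorous as an identity of operators $\Omega^{2r-k-1}(M)\to\Omega^{0,2r-k}(M)$, and in particular verifying that the image of $\partial$ never lands in the pure antiholomorphic component. This is purely a statement about the Dolbeault bigrading on the fixed manifold $M$ and does not interact with the $\mathcal{A}$-directions, so once the commutation of $d_{\mathcal{A}}$ with $\rho^{0,2r-k}$ is granted (which is immediate, as $\rho^{0,2r-k}$ is a constant bundle map on the coefficient space), the proof is a short bidegree argument with no analytic subtleties. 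I would present it as: apply $\rho^{0,2r-k}$ to both sides of Proposition \ref{Prop1}, commute it past $d_{\mathcal{A}}$ on the left, and resolve $\rho^{0,2r-k}\circ d$ into $\overline{\partial}\circ\rho^{0,2r-k-1}$ on the right.
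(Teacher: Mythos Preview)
Your proposal is correct and follows essentially the same route as the paper: commute the linear projection $\rho^{0,2r-k}$ past $d_{\mathcal{A}}$, invoke Proposition \ref{Prop1}, and then use the bidegree identity $\rho^{0,2r-k}\circ d=\overline{\partial}\circ\rho^{0,2r-k-1}$. You actually supply more justification for this last identity than the paper does, which simply asserts it.
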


\begin{proof}
As $\rho^{0,2r-k}$ is a fiberwise linear,
\[
d_{\mathcal{A}}(\rho^{0,2r-k}\circ\beta_{k}^{p})\,=\,\rho^{0,2r-k}\circ(d_{\mathcal{A}}
\beta_{k}^{p})\, .
\]
Then, from Proposition \ref{Prop1},
\[
d_{\mathcal{A}}\widetilde{\beta}_{k}^{p}\,=\,(r-k)\rho^{0,2r-k}\circ(d\mathbf{\circ}
\beta_{k+1}^{p})\,=\,(r-k)\overline{\partial}\circ(\rho^{0,2r-k-1}\circ\beta_{k+1}^{p}
)\,=\,(r-k)\overline{\partial}\circ\widetilde{\beta}_{k+1}^{p}
\]
completing the proof.
\end{proof}

According to the Chern-Weil construction of characteristic classes, for any $A\,\in\,
\mathcal{A}$ the form $\widetilde{\beta}_{0}^{p}(A)\,\in\, C(\mathcal{A},\,
\Omega^{0,2r}(M))$ is the $(0,r)$-part of the characteristic polynomial $p(F^{A})$.

Below we assume that $k\,>\,0$.

Let $\mathcal{F}^{0,2}$ denote the subset of $\mathcal{A}$ consisting of all
connections $A$ satisfying the condition that the $(0,2)$-part of the
curvature of $A$ vanishes, that is
\[
\mathcal{F}^{0,2}\, =\, \{A\,\in\,\mathcal{A}\,\mid\, \rho^{0,2}(F^{A})\,=\,0\}.
\]
We note that any $A\, \in\, \mathcal{F}^{0,2}$ produces a
complex structure on the principal $G$-bundle $P$. Let $S$ be a $(k-1)$-dimensional
compact oriented manifold, and let
\[
f\,:\,S\,\longrightarrow\, \mathcal{F}^{0,2}
\]
be a smooth map.

\begin{lemma}
\label{lem02}
Take any $k\, <\, r$.
The form $\widetilde{\beta}_{k}^{p}\,\in\,\Omega^{k}(\mathcal{A},\,\Omega^{2r-k}(M))$
vanishes along $\mathcal{F}^{0,2}$.
\end{lemma}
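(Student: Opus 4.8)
The plan is to reduce everything to the explicit formula for $\beta_k^p$ established in Lemma \ref{lem2.1} and then perform a bidegree count. Since $k\,<\,r\,\leq\,r$, Lemma \ref{lem2.1} applies and gives
\[
\beta_{k}^{p}(A)(\xi_{1},\cdots ,\xi_{k})\,=\,N_{r,k}\,p(\xi_{1},\cdots ,\xi_{k},F^{A},\overset{(r-k)}{\cdots},F^{A})\,\in\,\Omega^{2r-k}(M)
\]
for $\xi_{1},\cdots,\xi_{k}\,\in\,\Omega^{1}(M,\,\mathrm{ad}P)$, and by definition $\widetilde{\beta}_{k}^{p}(A)\,=\,\rho^{0,2r-k}\circ\beta_{k}^{p}(A)$. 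So the task is to show that the $(0,2r-k)$-part of the right-hand side vanishes whenever $A\,\in\,\mathcal{F}^{0,2}$.

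First I would decompose each argument into its components of pure bidegree. Each $\xi_{i}$ is a $1$-form, hence $\xi_{i}\,=\,\xi_{i}^{1,0}+\xi_{i}^{0,1}$, and the curvature is a $2$-form, hence $F^{A}\,=\,(F^{A})^{2,0}+(F^{A})^{1,1}+(F^{A})^{0,2}$. Because $p$ is multilinear in its $r$ entries and the wedge product on the form part adds bidegrees, expanding $p(\xi_{1},\cdots,\xi_{k},F^{A},\cdots,F^{A})$ by multilinearity yields a sum of terms, each of pure bidegree $(a,b)$ with $a+b\,=\,2r-k$. A term contributes to the $(0,2r-k)$-component precisely when its holomorphic degree $a$ is zero; since every bidegree index is nonnegative, this forces each $\xi_{i}$ to enter through its $(0,1)$-part and each of the $r-k$ curvature factors to enter through its $(0,2)$-part. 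Consequently
\[
\widetilde{\beta}_{k}^{p}(A)(\xi_{1},\cdots,\xi_{k})\,=\,N_{r,k}\,p\bigl(\xi_{1}^{0,1},\cdots,\xi_{k}^{0,1},(F^{A})^{0,2},\overset{(r-k)}{\cdots},(F^{A})^{0,2}\bigr)\, .
\]

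The conclusion is then immediate from the defining condition of $\mathcal{F}^{0,2}$. The hypothesis $k\,<\,r$ guarantees that $r-k\,\geq\,1$, so at least one factor $(F^{A})^{0,2}\,=\,\rho^{0,2}(F^{A})$ appears in the expression above; for $A\,\in\,\mathcal{F}^{0,2}$ this factor is zero, and hence the whole term vanishes by multilinearity of $p$. Therefore $\widetilde{\beta}_{k}^{p}$ vanishes along $\mathcal{F}^{0,2}$. The only point requiring care is the bidegree bookkeeping that isolates the $(0,2r-k)$-component, and this is routine; the essential input is simply that the strict inequality $k\,<\,r$ forces the presence of a curvature factor, which is exactly what is killed on $\mathcal{F}^{0,2}$.
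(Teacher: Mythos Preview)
Your proof is correct and follows essentially the same approach as the paper's: both use the explicit formula from Lemma~\ref{lem2.1} and observe that the $(0,2r-k)$-component forces each curvature factor to contribute its $(0,2)$-part, which vanishes on $\mathcal{F}^{0,2}$ since $r-k>0$. Your version simply spells out the bidegree bookkeeping more explicitly than the paper does.
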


\begin{proof}
We have
\[
\widetilde{\beta}_{k}^{p}(A)(\xi_{1},\cdots ,\xi_{k})\,=\,\rho^{0,2r-k}(p(\xi_{1},\cdots ,\xi
_{k},F^{A},\overset{(r-k)}{\cdots},F^{A})),
\]
for $\xi_{1},\cdots ,\xi_{k}\in T_{A}\mathcal{A}$. Since $r-k\, >\,0$, and
$\rho^{0,2}(F^{A})\,=\,0$, the top $(0,2r-k)$-part of $p(\xi_{1},\cdots ,\xi_{k}
,F^{A},\overset{(r-k)}{\cdots },F^{A})$ is zero.
\end{proof}

Assume that $S\,=\,\partial T$, where $T$ as before is a $k$-dimensional oriented
manifold such that the inclusion $S\, \hookrightarrow\, T$ is compatible with
the orientations. Choose an extension
$\overline{f}\,:\,T\,\longrightarrow\,\mathcal{A}$. Let
$\widetilde{\Lambda}_{k}^{p}(f)\,\in\,\Omega^{0,2r-k}(M)$ be the element defined by
\[
\widetilde{\Lambda}_{k}^{p}(f)\,=\,\int_{T}\overline{f}^{\ast}\widetilde{\beta}_{k}^{p}
\,\in\,\Omega^{0,2r-k}(M)\, .
\]

\begin{proposition}
The form $\widetilde{\Lambda}_{k}^{p}(f)\in\Omega^{0,2r-k}(M)$ is
$\overline{\partial}$-closed.
\end{proposition}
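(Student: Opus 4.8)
The plan is to mimic exactly the proof of Lemma~\ref{lem2}, replacing the de Rham differential $d$ by the Dolbeault differential $\overline{\partial}$ and the form $\beta_k^p$ by its $(0,\bullet)$-component $\widetilde{\beta}_k^p$, and using Proposition~\ref{PropD1} in place of Proposition~\ref{Prop1}. First I would note that $\overline{\partial}$ is a linear operator that commutes with integration over $T$ (the fiber variable), so that
\[
\overline{\partial}\,\widetilde{\Lambda}_{k}^{p}(f)\,=\,\overline{\partial}\int_{T}\overline{f}^{\ast}\widetilde{\beta}_{k}^{p}
\,=\,\int_{T}\overline{f}^{\ast}\bigl(\overline{\partial}\circ\widetilde{\beta}_{k}^{p}\bigr)\, .
\]

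Next I would invoke Proposition~\ref{PropD1} in the form $\overline{\partial}\circ\widetilde{\beta}_{k}^{p}=\tfrac{1}{r-k+1}\,d_{\mathcal{A}}\widetilde{\beta}_{k-1}^{p}$ (shifting the index down by one, exactly as in Lemma~\ref{lem2}), to rewrite the integrand as a $d_{\mathcal{A}}$-exact form. Then I would apply Stokes' theorem on $T$ for the fiber-differential $d_{\mathcal{A}}$, converting the integral over $T$ of $\overline{f}^{\ast}d_{\mathcal{A}}\widetilde{\beta}_{k-1}^{p}$ into an integral over $\partial T=S$ of $\overline{f}^{\ast}\widetilde{\beta}_{k-1}^{p}=f^{\ast}\widetilde{\beta}_{k-1}^{p}$. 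This yields
\[
\overline{\partial}\,\widetilde{\Lambda}_{k}^{p}(f)\,=\,\frac{1}{r-k+1}\int_{S}f^{\ast}\widetilde{\beta}_{k-1}^{p}\, .
\]

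Finally I would observe that the map $f$ takes values in $\mathcal{F}^{0,2}$, and that by Lemma~\ref{lem02} the form $\widetilde{\beta}_{k-1}^{p}$ vanishes identically along $\mathcal{F}^{0,2}$ (here the relevant index is $k-1<r$, which holds whenever the hypothesis $k\le r$ is in force, since $k-1<k\le r$ gives $k-1<r$). Hence the pulled-back integrand $f^{\ast}\widetilde{\beta}_{k-1}^{p}$ is zero, the boundary integral vanishes, and $\widetilde{\Lambda}_{k}^{p}(f)$ is $\overline{\partial}$-closed. The only point requiring a little care -- and the main conceptual obstacle compared to the real case -- is the passage $\rho^{0,2r-k}\circ(d\circ\beta_{k+1}^{p})=\overline{\partial}\circ(\rho^{0,2r-k-1}\circ\beta_{k+1}^{p})$ that underlies Proposition~\ref{PropD1}: one must be sure that the total exterior derivative $d$, after projecting onto the $(0,\bullet)$ part, contributes only through its $\overline{\partial}$-component and that no $\partial$-term survives the projection. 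Since Proposition~\ref{PropD1} has already established this identity, the argument here is a direct and routine transcription of Lemma~\ref{lem2}.
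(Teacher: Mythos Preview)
Your proposal is correct and follows exactly the same route as the paper's proof: commute $\overline{\partial}$ past the integral, invoke Proposition~\ref{PropD1} (with the index shifted down by one) to replace $\overline{\partial}\circ\widetilde{\beta}_k^p$ by a $d_{\mathcal{A}}$-exact term, apply Stokes on $T$, and kill the resulting boundary integral using Lemma~\ref{lem02}. In fact you are slightly more careful than the paper, which silently drops the constant $\tfrac{1}{r-k+1}$ in the intermediate step; since the final integral vanishes anyway, this is harmless, but your version is the more accurate transcription.
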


\begin{proof}
From Proposition \ref{PropD1}, we have
\begin{align*}
\overline{\partial}\widetilde{\Lambda}_{k}^{p}(f) &
\,=\,\overline{\partial}\int_{T}\overline{f}^{\ast}\widetilde{\beta}_{k}^{p}
\,=\,\int_{T}\overline{f}^{\ast}(\overline{\partial}
\circ\widetilde{\beta}_{k}^{p})\\
&
=\,\int_{T}\overline{f}^{\ast}d_{\mathcal{A}}\widetilde{\beta}_{k-1}^{p}\,=\,\int_{S}f^{\ast
}\widetilde{\beta}_{k-1}^{p}\, .
\end{align*}
This last integral vanished by virtue of Lemma \ref{lem02}.
\end{proof}

Therefore, we have an element
\[
\widetilde{\Lambda}_{k}^{p}(f)\,\in\, H^{0,2r-k}(M)
\]
of the Dolbeault cohomology.

The following three Propositions can be proved by adapting the proofs of
Propositions \ref{cob}, \ref{hoty} and \ref{globalgauge}
respectively; this in particular involves substituting the de Rham differential $d$
by the Dolbeault differential $\overline{\partial}$ and taking into account
Lemma \ref{lem02}.

\begin{proposition}
Take any $1\, <\, k\, <\, r$.
The Dolbeault cohomology class $\widetilde{\Lambda}_{k}^{p}(f)$ does not depend
on either the choice of $T$ or the choice of the extension
$\overline{f}$.
\end{proposition}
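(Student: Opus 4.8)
The plan is to follow the architecture of the proof of Proposition~\ref{cob}, replacing the de Rham differential $d$ by the Dolbeault differential $\overline{\partial}$; the one step that does not transcribe verbatim is the cap-product argument, which I will replace by a bidegree computation. First I would take two choices $(T_{1},\overline{f}_{1})$ and $(T_{2},\overline{f}_{2})$ as in the statement, glue them along $S$ into a smooth map $\widehat{f}\,:\,D\,\longrightarrow\,\mathcal{A}$ defined on the closed oriented $k$-manifold $D\,=\,T_{1}\cup_{S}(-T_{2})$, and set
\[
\widetilde{\theta}\,:=\,\int_{D}\widehat{f}^{\ast}\widetilde{\beta}_{k}^{p}\,=\,\int_{T_{1}}\overline{f}_{1}^{\ast}\widetilde{\beta}_{k}^{p}-\int_{T_{2}}\overline{f}_{2}^{\ast}\widetilde{\beta}_{k}^{p}\,\in\,\Omega^{0,2r-k}(M)\, .
\]
Since $\widetilde{\theta}$ is exactly the difference of the two candidate representatives of $\widetilde{\Lambda}_{k}^{p}(f)$, the proposition reduces to proving that $\widetilde{\theta}$ is $\overline{\partial}$-exact.

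The form $\widetilde{\theta}$ is $\overline{\partial}$-closed: this is the $\overline{\partial}$-closedness result established just above, applied with $T=D$, where now $\partial D=\varnothing$ makes the boundary term disappear. The hypothesis $k>1$ enters exactly as there, through Lemma~\ref{lem02}, which is what makes each $\widetilde{\Lambda}_{k}^{p}(f)$ a well-defined $\overline{\partial}$-closed form in the first place, via the vanishing of $\widetilde{\beta}_{k-1}^{p}$ along $\mathcal{F}^{0,2}$.

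The heart of the matter is exactness, and this is where I expect the main obstacle. The cap-product argument of Proposition~\ref{cob} lives in singular (de Rham) cohomology and has no intrinsic Dolbeault analogue, so it must be genuinely adapted. My preferred route exploits the defining identity $\widetilde{\beta}_{k}^{p}=\rho^{0,2r-k}\circ\beta_{k}^{p}$, which yields $\widetilde{\theta}=\rho^{0,2r-k}(\theta)$, where $\theta=\int_{D}\widehat{f}^{\ast}\beta_{k}^{p}$ is the full (complex-valued) de Rham form. By the $\mathbb{C}$-coefficient version of Proposition~\ref{cob}, whose proof is unchanged, one has $\theta=d\eta$ for some $\eta\in\Omega^{2r-k-1}(M)$. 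Decomposing $\eta$ by bidegree and using that $\partial$ raises the holomorphic degree while $\overline{\partial}$ raises the antiholomorphic one, the only part of $d\eta$ landing in bidegree $(0,2r-k)$ is $\overline{\partial}$ applied to the $(0,2r-k-1)$-component of $\eta$; hence
\[
\widetilde{\theta}\,=\,\rho^{0,2r-k}(d\eta)\,=\,\overline{\partial}\bigl(\rho^{0,2r-k-1}\eta\bigr)\, ,
\]
which is $\overline{\partial}$-exact and finishes the proof. The essential new point, absent from the de Rham argument, is precisely that the $(0,2r-k)$-projection of an exact form is again $\overline{\partial}$-exact, which holds because $\partial$ cannot produce holomorphic degree below $0$.

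If one prefers an argument that never leaves the Dolbeault world, I would instead cone $\widehat{f}$ to a constant map using the affine structure of $\mathcal{A}$: set $H\,:\,D\times[0,1]\,\longrightarrow\,\mathcal{A}$, $H(z,t)=(1-t)\widehat{f}(z)+tA_{0}$ for a fixed $A_{0}$, apply Stokes on $D\times[0,1]$ together with Proposition~\ref{PropD1}, and use that the end $t=1$ contributes nothing since a $k$-form with $k\geq 1$ pulls back to $0$ under a constant map. This gives
\[
\widetilde{\theta}\,=\,-(r-k)\,\overline{\partial}\!\int_{D\times[0,1]}H^{\ast}\widetilde{\beta}_{k+1}^{p}\, ,
\]
again manifestly $\overline{\partial}$-exact; here the hypothesis $k<r$ is what makes Proposition~\ref{PropD1} available with a nonzero coefficient $r-k$.
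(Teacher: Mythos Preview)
Your proof is correct. The paper gives no detailed argument for this proposition, only instructing the reader to adapt Proposition~\ref{cob} by replacing $d$ with $\overline{\partial}$ and invoking Lemma~\ref{lem02}; your route~(a) is exactly that adaptation, with the one nontrivial step the paper leaves implicit made explicit, namely that $\rho^{0,2r-k}(d\eta)=\overline{\partial}(\rho^{0,2r-k-1}\eta)$ because $\partial$ cannot lower holomorphic degree below zero. Your route~(b) is a genuinely different and self-contained alternative: by coning $\widehat{f}$ to a constant and combining Stokes with Proposition~\ref{PropD1}, you obtain $\overline{\partial}$-exactness directly, without ever passing through the de Rham cap-product argument of Proposition~\ref{cob} and Lemma~\ref{lem3}. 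This second route is arguably cleaner in the Dolbeault setting, and it also makes transparent why the hypothesis $k<r$ is needed (so that the coefficient $r-k$ in Proposition~\ref{PropD1} is nonzero). One minor imprecision: you attribute the role of the hypothesis $k>1$ to Lemma~\ref{lem02}, but that lemma's hypothesis is $k<r$; applied to $\widetilde{\beta}_{k-1}^{p}$ it only requires $k-1<r$, which already follows from $k<r$. The lower bound $k>1$ in the paper's statement is not actually used in either of your arguments (nor, apparently, in the paper's intended one).
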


\begin{proposition}
The Dolbeault class $\widetilde{\Lambda}_{k}^{p}(f)$ is gauge invariant, that is,
given any $\Phi\,\in\,\mathrm{Gau}P$,
\[
\widetilde{\Lambda}_{k}^{p}(\Phi\cdot f)\,=\,\widetilde{\Lambda}_{k}^{p}(f)\, ,
\]
where $\Phi$ acts on connection in the natural way.
\end{proposition}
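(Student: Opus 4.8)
The plan is to mimic the proof of Proposition~\ref{globalgauge}, the only genuinely new point being that the bidegree projection $\rho^{0,2r-k}$ is insensitive to the gauge action. Concretely, I would reduce everything to the gauge invariance of the Dolbeault form $\widetilde{\beta}_{k}^{p}$ on $\mathcal{A}$, namely $\Phi_{\mathcal{A}}^{\ast}\widetilde{\beta}_{k}^{p}\,=\,\widetilde{\beta}_{k}^{p}$ for all $\Phi\in\mathrm{Gau}P$, together with the invariance of the subset $\mathcal{F}^{0,2}$.

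To prove this invariance I would argue as follows. Since $\widetilde{\beta}_{k}^{p}\,=\,\rho^{0,2r-k}\circ\beta_{k}^{p}$, and $\rho^{0,2r-k}$ is a fiberwise-linear operator on the coefficient space $\Omega^{2r-k}(M)$, it suffices to know that $\rho^{0,2r-k}$ commutes with the action on coefficients induced by $\Phi_{\mathcal{A}}$. The key observation is that every gauge transformation $\Phi$ covers the identity of $M$, so its effect on an $\mathrm{ad}P$-valued form on $M$ is entirely through the bundle automorphism $\Phi_{\mathrm{ad}}$ and leaves the scalar differential-form factor---hence the $(p,q)$-type---unchanged. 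Thus $\rho^{0,2r-k}$ and $\Phi_{\mathcal{A}}^{\ast}$ commute, and combining this with $\Phi_{\mathcal{A}}^{\ast}\beta_{k}^{p}\,=\,\beta_{k}^{p}$ from Proposition~\ref{inv} gives $\Phi_{\mathcal{A}}^{\ast}\widetilde{\beta}_{k}^{p}\,=\,\widetilde{\beta}_{k}^{p}$. By the same token, $\Phi_{\mathcal{A}}$ preserves $\mathcal{F}^{0,2}$: for $A\in\mathcal{F}^{0,2}$ one has $F^{\Phi_{\mathcal{A}}(A)}\,=\,\Phi_{\mathrm{ad}}\circ F^{A}$, whence $\rho^{0,2}(F^{\Phi_{\mathcal{A}}(A)})\,=\,\Phi_{\mathrm{ad}}\circ\rho^{0,2}(F^{A})\,=\,0$, so $\Phi\cdot f\,=\,\Phi_{\mathcal{A}}\circ f$ is again a map $S\to\mathcal{F}^{0,2}$ and $\widetilde{\Lambda}_{k}^{p}(\Phi\cdot f)$ is defined.

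Finally, to compute $\widetilde{\Lambda}_{k}^{p}(\Phi\cdot f)$ I would choose the extension $\Phi_{\mathcal{A}}\circ\overline{f}\,:\,T\to\mathcal{A}$ of $\Phi\cdot f$---legitimate since its restriction to $S$ is $\Phi_{\mathcal{A}}\circ f\,=\,\Phi\cdot f$---and then
\[
\widetilde{\Lambda}_{k}^{p}(\Phi\cdot f)\,=\,\int_{T}(\Phi_{\mathcal{A}}\circ\overline{f})^{\ast}\widetilde{\beta}_{k}^{p}\,=\,\int_{T}\overline{f}^{\ast}(\Phi_{\mathcal{A}}^{\ast}\widetilde{\beta}_{k}^{p})\,=\,\int_{T}\overline{f}^{\ast}\widetilde{\beta}_{k}^{p}\,=\,\widetilde{\Lambda}_{k}^{p}(f),
\]
the third equality being the invariance established above; the preceding Proposition guarantees independence of the chosen extension, so this is enough. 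I expect the main obstacle to be precisely the first step: one must check carefully that the $(0,2r-k)$-projection is unaffected by the gauge action, a point that rests entirely on gauge transformations fixing $M$ pointwise and therefore respecting the complex bidegree of forms on $M$.
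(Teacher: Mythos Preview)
Your proposal is correct and follows essentially the same approach the paper indicates: the paper simply says this proposition is proved by adapting the proof of Proposition~\ref{globalgauge} (which in turn reduces to Proposition~\ref{inv} together with invariance of the relevant subset under $\Phi_{\mathcal{A}}$). You have merely spelled out the one additional point needed in the Dolbeault setting, namely that gauge transformations cover the identity of $M$ and hence commute with the bidegree projection $\rho^{0,2r-k}$, and you have verified that $\mathcal{F}^{0,2}$ is preserved by $\Phi_{\mathcal{A}}$; both are exactly the adaptations the paper has in mind.
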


\begin{proposition}
Take any $k\,<\,r$. If $f_{1}\,:\,S\,\longrightarrow\,\mathcal{F}^{0,2}$ and $f_{2}\,:\,S\,\longrightarrow\, \mathcal{F}^{0,2}$ are homologous in $\mathcal{F}^{0,2}$, then
\[
\widetilde{\Lambda}_{k}^{p}(f_{1})\,=\,\widetilde{\Lambda}_{k}^{p}(f_{2})
\]
in $H^{0,2r-k}(M)$.
\end{proposition}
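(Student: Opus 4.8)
The plan is to transcribe the proof of Proposition~\ref{hoty} into the Dolbeault setting, replacing the de Rham differential $d$ by $\overline{\partial}$ throughout and invoking Lemma~\ref{lem02} in place of the vanishing of $\beta^p_k$ along $\mathcal{F}$. Since $f_1$ and $f_2$ are homologous in $\mathcal{F}^{0,2}$, there is a $k$-dimensional chain $D\subset\mathcal{F}^{0,2}$ with $\partial D=f_1(S)\cup(-f_2(S))$. First I would assemble the $k$-cycle $D\cup(-\overline{f}_1(T))\cup\overline{f}_2(T)$ in $\mathcal{A}$; because $\mathcal{A}$ is contractible, this cycle bounds a $(k+1)$-chain $C\subset\mathcal{A}$. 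Applying Stokes' theorem and using $\partial C=D\cup(-\overline{f}_1(T))\cup\overline{f}_2(T)$ gives
\[
\int_C d_{\mathcal{A}}\widetilde{\beta}^p_k \,=\, \int_D \widetilde{\beta}^p_k \,-\, \int_T \overline{f}_1^*\widetilde{\beta}^p_k \,+\, \int_T \overline{f}_2^*\widetilde{\beta}^p_k\, .
\]

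Next I would dispose of the term $\int_D\widetilde{\beta}^p_k$: since $k<r$ and $D$ lies in $\mathcal{F}^{0,2}$, Lemma~\ref{lem02} shows that $\widetilde{\beta}^p_k$ vanishes along $\mathcal{F}^{0,2}$, so this integral is zero. The remaining two terms are, by definition, exactly $-\widetilde{\Lambda}^p_k(f_1)+\widetilde{\Lambda}^p_k(f_2)$. Finally, I would rewrite the left-hand side using Proposition~\ref{PropD1}, which gives $d_{\mathcal{A}}\widetilde{\beta}^p_k=(r-k)\,\overline{\partial}\circ\widetilde{\beta}^p_{k+1}$; since $\overline{\partial}$ is a linear operator on $M$-forms that commutes with integration over the chain $C\subset\mathcal{A}$, one obtains
\[
\int_C d_{\mathcal{A}}\widetilde{\beta}^p_k \,=\, (r-k)\,\overline{\partial}\int_C\widetilde{\beta}^p_{k+1}\, .
\]
Combining the two displays shows that $\widetilde{\Lambda}^p_k(f_2)-\widetilde{\Lambda}^p_k(f_1)$ is $\overline{\partial}$-exact, whence the two classes coincide in $H^{0,2r-k}(M)$.

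Because the genuine analytic content has already been isolated in Proposition~\ref{PropD1} and Lemma~\ref{lem02}, the argument is essentially bookkeeping, and the only point demanding care—the exact counterpart of the one subtlety in Proposition~\ref{hoty}—is the commutation of $\overline{\partial}$ with the chain integral $\int_C$. This is legitimate because $\overline{\partial}$ differentiates in the $M$-directions while $\int_C$ integrates in the $\mathcal{A}$-directions, so the two operations are independent; it is the Dolbeault analogue of the identity $d\circ\int_C=\int_C\circ\,d$ used in the de Rham case. I would also make explicit the step that actually uses the hypothesis, namely that the homology between $f_1$ and $f_2$ can be realized by a chain $D$ lying entirely in $\mathcal{F}^{0,2}$ and not merely in $\mathcal{A}$; this is precisely what allows Lemma~\ref{lem02} to kill $\int_D\widetilde{\beta}^p_k$ and is the crux on which the whole argument turns.
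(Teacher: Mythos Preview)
Your proposal is correct and is precisely the adaptation the paper itself prescribes: the paper does not give a separate proof but states that the result follows by adapting the proof of Proposition~\ref{hoty}, replacing $d$ by $\overline{\partial}$ and invoking Lemma~\ref{lem02}. Your write-up carries this out faithfully, with the same chain construction $D\cup(-\overline{f}_1(T))\cup\overline{f}_2(T)$, the same use of Stokes' theorem, and the same appeal to Proposition~\ref{PropD1} (in place of Proposition~\ref{Prop1}) to exhibit the difference as $\overline{\partial}$-exact.
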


As in Section \ref{hom}, for any $1\,<\,k\,<\,r$, setting $S\,=\,S^{k-1}$ and
$T\,=\, \overline{B}^{k}$, a map from the homotopy
groups to the Dolbeault cohomology
\[
\widetilde{\Lambda}_{k}^{p}\,:\, \pi_{k-1}(\mathcal{F}^{0,2})\,\longrightarrow\,
H^{0,2r-k}(M)
\]
is obtained.

Define
\[
H^{0,m}(M,\,\mathbb{Z})\,=\, \{\rho^{0,m}(\omega)\,\mid\, \omega\,\in\,
H^{m}(M,\, \mathbb{Z})\}\, .
\]

\begin{proposition}
Take two maps $\Phi\,:\,S\,\longrightarrow\,\mathrm{Gau}P$ and $f_{1}\,:\,
S\,\longrightarrow\,\mathcal{F}$, and define
$$
f_{2}\,:\,S\,\longrightarrow\,\mathcal{F}\, ,\ \ f_{2}(x)\,=\,\Phi(x)\cdot f_{1}(x)\, .
$$
Then
\[
\widetilde{\Lambda}_{k}^{p}(f_{1})\,=\,\widetilde{\Lambda}_{k}^{p}(f_{2})\ \
\operatorname{mod}\ H^{0,2r-k}(M,\, \mathbb{Z})\, .
\]
\end{proposition}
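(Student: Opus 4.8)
The plan is to deduce this Dolbeault statement directly from its de Rham counterpart, Proposition \ref{Propg}, by means of the projection $\rho^{0,2r-k}$, rather than by repeating the bundle-gluing argument over $D\times M$. The first step is a pointwise identity relating the two invariants. Because $\widetilde{\beta}_{k}^{p}\,=\,\rho^{0,2r-k}\circ\beta_{k}^{p}$ and the projection $\rho^{0,2r-k}\,:\,\Omega^{2r-k}(M)\,\longrightarrow\,\Omega^{0,2r-k}(M)$ is a fixed $\mathbb{C}$-linear operator acting only on the coefficient space $\Omega^{2r-k}(M)$, it commutes with the pull-back $\overline{f}^{\ast}$ and with the fibre integration $\int_{T}$, both of which operate in the $T$-direction. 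Hence, for every $f\,:\,S\,\longrightarrow\,\mathcal{F}$ and every extension $\overline{f}\,:\,T\,\longrightarrow\,\mathcal{A}$,
\[
\widetilde{\Lambda}_{k}^{p}(f)\,=\,\int_{T}\overline{f}^{\ast}\widetilde{\beta}_{k}^{p}\,=\,\rho^{0,2r-k}\Big(\int_{T}\overline{f}^{\ast}\beta_{k}^{p}\Big)\,=\,\rho^{0,2r-k}\big(\Lambda_{k}^{p}(f)\big)\, ,
\]
so the Dolbeault invariant is literally the $(0,2r-k)$-projection of the de Rham one. Note that $\mathcal{F}\,\subset\,\mathcal{F}^{0,2}$ and that $f_{1}$, $f_{2}$ take values in $\mathcal{F}$, so that $\Lambda_{k}^{p}(f_{i})$ is available and Proposition \ref{Propg} genuinely applies.

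Second, I would check that $\rho^{0,m}$ induces a well-defined homomorphism $H^{m}(M,\,\mathbb{C})\,\longrightarrow\, H^{0,m}(M)$ from de Rham to Dolbeault cohomology. This is a bidegree count: if $d\omega\,=\,0$, then the $(0,m+1)$-component of $0\,=\,\partial\omega+\overline{\partial}\omega$ is precisely $\overline{\partial}(\rho^{0,m}\omega)$, so $\rho^{0,m}\omega$ is $\overline{\partial}$-closed; and if $\omega\,=\,d\eta$, then the $(0,m)$-component of $d\eta$ equals $\overline{\partial}(\rho^{0,m-1}\eta)$, so $\rho^{0,m}\omega$ is $\overline{\partial}$-exact. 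By the very definition $H^{0,m}(M,\,\mathbb{Z})\,=\,\{\rho^{0,m}(\omega)\,\mid\,\omega\,\in\, H^{m}(M,\,\mathbb{Z})\}$, this descended map carries the image of the integral lattice into $H^{0,2r-k}(M,\,\mathbb{Z})$.

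Finally, I would invoke Proposition \ref{Propg}: under the hypothesis $f_{2}(x)\,=\,\Phi(x)\cdot f_{1}(x)$, the difference $\Lambda_{k}^{p}(f_{1})-\Lambda_{k}^{p}(f_{2})$ represents a class lying in the image of $H^{2r-k}(M,\,\mathbb{Z})$. Applying the descended projection of the second step together with the identity of the first step yields
\[
[\widetilde{\Lambda}_{k}^{p}(f_{1})]-[\widetilde{\Lambda}_{k}^{p}(f_{2})]\,=\,\rho^{0,2r-k}\big([\Lambda_{k}^{p}(f_{1})]-[\Lambda_{k}^{p}(f_{2})]\big)\,\in\, H^{0,2r-k}(M,\,\mathbb{Z})\, ,
\]
which is exactly the claimed congruence modulo $H^{0,2r-k}(M,\,\mathbb{Z})$.

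The step I expect to require the most care is the well-definedness in the second paragraph: one must ensure that the $(0,m)$-projection of a de Rham class is a genuine invariant of that class and, more delicately, that projecting the specific integral representative produced by Proposition \ref{Propg} lands in the lattice $H^{0,2r-k}(M,\,\mathbb{Z})$ exactly as defined in the text, rather than in some larger real Dolbeault subspace. The bidegree argument settles the $\overline{\partial}$-invariance, and matching the outcome with the stated definition of $H^{0,m}(M,\,\mathbb{Z})$ closes the gap; everything else is formal.
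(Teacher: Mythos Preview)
Your proof is correct and follows essentially the same route as the paper: invoke Proposition~\ref{Propg} to obtain $\Lambda_{k}^{p}(f_{1})-\Lambda_{k}^{p}(f_{2})\in H^{2r-k}(M,\mathbb{Z})$, then apply $\rho^{0,2r-k}$ and use the definition of $H^{0,2r-k}(M,\mathbb{Z})$. Your version is slightly more explicit in verifying that $\rho^{0,m}$ descends to a map $H^{m}(M,\mathbb{C})\to H^{0,m}(M)$, a point the paper takes for granted.
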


\begin{proof}
As in the proof of Proposition \ref{Propg}, for any
$(2r-k)$-cycle $C$, we have
\[
\int_{C}\int_{T}\overline{f}_{1}^{\ast}\beta_{k}^{p}-\int_{C}\int_{T}\overline{f}
_{2}^{\ast}\beta_{k}^{p}\,\in\,\mathbb{Z}\, ,
\]
and hence
\[
\int_{T}\overline{f}_{1}^{\ast}\beta_{k}^{p}-\int_{T}\overline{f}_{2}^{\ast}\beta
_{k}^{p}\,\in\, H^{2r-k}(M,\mathbb{Z})\, .
\]
Then
\[
\rho^{0,2r-k}\left(
\int_{T}\overline{f}_{1}^{\ast}\beta_{k}^{p}-\int_{T}
\overline{f}_{2}^{\ast}\beta_{k}^{p}\right)\,=\,\Lambda_{k}^{p}(f_{1})-
\Lambda_{k}^{p}(f_{2})\in H^{0,2r-k}(M,\, \mathbb{Z})
\]
completing the proof.
\end{proof}

\begin{corollary}
There is a well defined homomorphism
\[
[C(S,\mathcal{F}^{0,2}/\mathrm{Gau}P)]_{\mathrm{lift}}\,\longrightarrow\, H^{0,2r-k}(M)/
H^{0,2r-k}(M,\,\mathbb{Z})\, ,
\]
defined on the set of homotopy classes of maps from $S$ to $\mathcal{F}^{0,2}/\mathrm{Gau}P$ that can be lifted to $\mathcal{F}^{0,2}$.
\end{corollary}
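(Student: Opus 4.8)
The plan is to follow the template of the real-coefficient Corollary, transporting each ingredient to the Dolbeault setting. Given a class in $[C(S,\mathcal{F}^{0,2}/\mathrm{Gau}P)]_{\mathrm{lift}}$, I would pick a representative $\varphi\,:\,S\,\longrightarrow\,\mathcal{F}^{0,2}/\mathrm{Gau}P$ together with a lift $f\,:\,S\,\longrightarrow\,\mathcal{F}^{0,2}$, and declare the image of the class to be the reduction of $\widetilde{\Lambda}_k^p(f)\,\in\, H^{0,2r-k}(M)$ modulo $H^{0,2r-k}(M,\mathbb{Z})$. Everything then reduces to showing that this reduction is insensitive to the two choices involved: the lift $f$ of a fixed $\varphi$, and the homotopy-class representative.

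For independence of the lift I would argue that any two lifts $f_1,f_2$ of the same $\varphi$ agree fibrewise up to gauge, so that there is a map $\Phi\,:\,S\,\longrightarrow\,\mathrm{Gau}P$ with $f_2(x)\,=\,\Phi(x)\cdot f_1(x)$ for all $x\,\in\, S$. The immediately preceding Proposition then gives $\widetilde{\Lambda}_k^p(f_1)\,=\,\widetilde{\Lambda}_k^p(f_2)$ modulo $H^{0,2r-k}(M,\mathbb{Z})$, which is exactly what is needed. Although that Proposition is phrased for lifts into $\mathcal{F}$, its proof carries over verbatim to $\mathcal{F}^{0,2}$: the group $\mathrm{Gau}P$ preserves $\mathcal{F}^{0,2}$, and the integrality of the pairing $\int_C\int_T(\cdots)$ against a $(2r-k)$-cycle $C$ is a statement about the Chern--Weil class of a globally defined $G$-bundle on $D\times M$, hence unaffected by extracting the $(0,2r-k)$-component.

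For homotopy invariance I would invoke the Dolbeault counterpart of Proposition \ref{hoty} proved above: two maps into $\mathcal{F}^{0,2}$ that are homologous there produce the same Dolbeault class. Since homotopic maps are homologous, a homotopy class of lifts carries a single well-defined value. To pass from a homotopy of $\varphi$ downstairs to a homology of lifts upstairs, I would replace the given lifts of the two endpoints by lifts of the endpoints of a lifted homotopy, comparing the two lifts of each endpoint by the gauge step above and the two lifted endpoints by the homology invariance; this is exactly the situation that the restriction to $[\,\cdot\,]_{\mathrm{lift}}$ is designed to accommodate. Combining the two independence statements yields a well-defined map; when $S\,=\,S^{k-1}$ the source is the group $\pi_{k-1}$ and the homomorphism property descends directly from the already-established homomorphism $\widetilde{\Lambda}_k^p\,:\,\pi_{k-1}(\mathcal{F}^{0,2})\,\longrightarrow\,H^{0,2r-k}(M)$.

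The main obstacle I anticipate is the lifting step: ensuring that a homotopy of the projected maps can be realized, at the level of the invariant, by a homology of lifts in $\mathcal{F}^{0,2}$, and that the pointwise gauge transformation $\Phi\,:\,S\,\longrightarrow\,\mathrm{Gau}P$ comparing two lifts can be chosen smoothly. Both are soft points in the smooth/Fréchet category rather than genuine cohomological difficulties; once they are dispatched, the integrality and vanishing inputs (the preceding Proposition together with Lemma \ref{lem02}) close the argument.
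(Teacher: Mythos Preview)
Your proposal is correct and follows essentially the same route as the paper. The paper in fact gives no proof at all for this Corollary, leaving it as an immediate consequence of the preceding Proposition, in direct parallel with the brief proof of the real-coefficient Corollary in Section~3.2; your argument simply spells out that parallel, and is more careful than the paper in two respects: you explicitly treat homotopy invariance (via the Dolbeault analogue of Proposition~\ref{hoty}), and you correctly flag that the preceding Proposition is stated for $\mathcal{F}$ rather than $\mathcal{F}^{0,2}$ (evidently a slip in the paper) and observe that this is harmless.
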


\section{Examples}

\subsection{Surfaces}

Recall that the form $\beta_{k}^{p}$ is not necessarily trivial
for
\[
2r-k\,\leq\, n\, ,\ \ k\,\leq\, r\, .
\]
Furthermore, the homomorphism $\Lambda^{p}\,:\,\pi_{k-1}(\mathcal{F)}\,\longrightarrow\,
H^{2r-k}(M)$ defined above only makes sense for $2\,\leq\, k\,<\,r$. If $M$ is a
surface ($n\,=\,2$), we have the following:
\begin{itemize}
\item Assume that $r\,=\,1$ and $k\,=\,0$. Then $\beta_{k}^{p}\,\in\,
C^{\infty}(\mathcal{A},\, \Omega^{2}(M))$ is the Chern-Weil characteristic form map.

\item Assume that $r\,=\,1$ and $k\,=\,1$. Then $\beta_{1}^{p}\,\in\,
\Omega^{1}(\mathcal{A},\, \Omega^{1}(M))$ does not depend on $A\,\in\,\mathcal{A}$
\[
\beta_{1}^{p}(\xi_{1})\,=\,p(\xi_{1})\, .
\]
We do not have homotopy invariance as $k\,=\,r$ so Proposition \ref{hoty} does
not apply.

\item Assume that $r\,=\,2$ and $k\,=\,2$. Then $\beta_{2}^{p}\,\in\,\Omega^{2}(\mathcal{A},
\, \Omega^{2}(M))$ does not depend on $A\,\in\,\mathcal{A}$.
Again, as $k=r$, we do not have homotopy invariance.

In any case, for $G\,=\,{\rm U}(m)$ and
$p(X)\,=\,\mathrm{tr}(X^{2})$, integrating $\beta_{2}^{p}$ along
$M$ gives the canonical symplectic form on $\mathcal{A}$ (see
\cite{AB}) given by
\[
\omega(\xi_{1},\xi_{2})\,=\,\int_{M}\mathrm{tr}(\xi_{1}\wedge\xi_{2})\, .
\]
\end{itemize}

\subsection{Case of $k=1$}

We consider a set of two points $S\,=\,\{0\, ,1\}$ and $T\,=\,[0\,
,1]$. A map $f\,:\,S\,\longrightarrow\,\mathcal{F}$ just gives two
flat connections whereas the extension $\overline{f}\,:\,[0\,
,1]\,\longrightarrow\,\mathcal{A}$ is a path connecting them. The
invariant $\Lambda^{p}(f)\,\in\, H^{2r-1}(M,\,\mathbb{R})$ defined
as
\[
\Lambda^{p}([f])\,=\, \int_{0}^{1}\overline{f}^{\ast}\beta_{1}^{p}\, ,
\]
is independent of the extension $\overline{f}$ and the homotopy
class of $f$. If the two points $A_{1}\, ,A_{2}\,\in\,\mathcal{F}$ are in
the same arc-connected component of $\mathcal{F}$, then the
extension $\overline{f}$ can be defined entirely contained in
$\mathcal{F}$, so that $\Lambda^{p}([f])\,=\,0$. The map
$\Lambda^{p}([f])$ thus gives information of points in different
connected
components of $\mathcal{F}$. Fixing $A_{1}$, we can define
\begin{align*}
\pi_{0}(\mathcal{F}) & \,\longrightarrow\, H^{2r-1}(M,\,\mathbb{R})\\
K & \,\longmapsto\, \Lambda([f])\, ,
\end{align*}
where $f\,:\,\{0\, ,1\}\,\longrightarrow\,\mathcal{F}$, $f(0)\,=\,A_{1}$,
$f(1)\,\in \,K$, with $K$ being an arc-connected component of $\mathcal{F}$.

We develop the definition of $\Lambda^{p}([f])$. Given $A_{1}$ and $A_{2}$,
the extension $\overline{f}$ can be taken as the line
\[
\overline{f}(t)\,=\,(1-t)A_{1}+tA_{2}
\]
so that
\[
\Lambda^{p}([f])\,
=\,\int_{0}^{1}p(A_{2}-A_{1},F_{t},\overset{(r-1)}{\cdots},F_{t})dt\, ,
\]
where $F_{t}$ is the curvature of $A_{t}\,=\,(1-t)A_{1}+tA_{2}$.
This is precisely the expression of the transgression formula for
$p$, $A_1$ and $A_2$. In fact, writing $\xi\,=\,A_{2}-A_{1}$, we
have
\[
F_{t}\,=\,F_{A_{1}}+t\nabla^{A_{1}}\xi+t^{2}[\xi,\xi]\, ,
\]
and as $F^{A_{1}}\,=\,0$ and $F^{A_{1}+\xi}\,=\,\nabla^{A_{1}}\xi+[\xi,\xi]\,=\,0$, we
have
\[
F_{t}\,=\,(t^{2}-t)[\xi,\xi]\, .
\]
Then the invariant $\Lambda^{p}([f])$ is
\[
\Lambda^{p}([f])\,=\,p(\xi,[\xi,\xi],\overset{(r-1)}{\cdots},[\xi,\xi])
\int_{0}^{1}(1-t^{2})^{r-1}dt\, .
\]
For Abelian gauge theories, this expression is trivial, but when $G$
is non-Abelian,
we get a non-vanishing invariant. For example, consider $P\,=\,M\times {\rm SU}(2)$,
and let $p$ be the inner product on $\mathfrak{su}(2)\,\simeq\,\mathbb{R}^{3}$.
In this case,
\[
\Lambda^{p}([f])\,=\,-\frac{2}{3}\langle\xi,[\xi,\xi]\rangle\,=\,-\frac{2}{3}\det
(\xi,\xi,\xi)\in H^{3}(M,\mathbb{R})\, ,
\]
$\xi\,\in\,\Omega^{1}(M,\,\mathfrak{su}(2))\,=\,\Omega^{1}(M,\,\mathbb{R}^{3})$.
If we put $\xi\,=\,\xi^{i}B_{i}$ for an orthonormal basis (for
instance, the Pauli matrices) $B_{1} ,B_{2},B_{3}$, then
\[
\Lambda^{p}([f])\,=\,-\frac{2}{3}\xi_{1}\wedge\xi_{2}\wedge\xi_{3}\, .
\]
As $d\xi+[\xi,\xi]\,=\,0$, this is equivalent to
\[
\Lambda^{p}([f])\,=\,-\frac{2}{3}\xi_{1}\wedge d\xi_{1}\, ,
\]
which is a Chern-Simons type formula.

\subsection{Case of $k=2$}

For $r>2$, we then have morphisms
\[
\Lambda_{2}^{p}\,:\,\pi_{1}(\mathcal{F})\,\longrightarrow\, H^{2r-2}(M,\mathbb{R}),
\]
and
\[
\Lambda_{2}^{p}\,:\,[C(S^1,\mathcal{F}/\mathrm{Gau}P)]_{\mathrm{lift}}
\,\longrightarrow\,
H^{2r-2}(M,\mathbb{R})/H^{2r-2}(M,\mathbb{Z})
\]
which make sense if $2r-2\,\leq\,\dim M$. The first relevant case
is $\dim M=4$ and $r=3$. For example, when $G\,=\,{\rm U}(2)$, the
set of invariant polynomials is generated by the trace $p_{1}$ and
the determinant $p_{2}$. For $n=4, r=3$, we can consider either
$p\,=\,p_{1}^{3}$ or $p\,=\,p_{2}p_{1}$. We then have
\begin{align*}
\Lambda_{2}^{p} & \,:\,\pi_{1}(\mathcal{F})\,\longrightarrow\,\mathbb{R}\, ,\\
\Lambda_{2}^{p} & \,:\,
[C(S^1,\mathcal{F}/\mathrm{Gau}P)]_{\mathrm{lift}}
\,\longrightarrow \,S^{1}\, .
\end{align*}
Given a loop $A(s)$ in $\mathcal{F}$, we can choose a simple extension as
$\overline{f}\,:\,[0\, ,1]\times\lbrack0,1]\,\longrightarrow\,\mathcal{A}$
\[
\overline{f}(s,t)\,=\,(1-t)A_{0}+tA(s)\, ,
\]
where $A_{0}\,=\,A(0)$. Then
\begin{align*}
\Lambda_{2}^{p}(f) & =\int_{0}^{1}\int_{0}^{1}p(\partial_{t}\overline{f}
,\partial_{s}\overline{f},F^{\overline{f}(s,t)},\overset{(r-2)}{\cdots},F^{\overline{f}(s,t)})\\
& =\int_{0}^{1}\int_{0}^{1}p(A(s)-A_{0},t\dot{A}(s),F^{\overline{f}(s,t)}
,\overset{(r-2)}{\cdots},F^{\overline{f}(s,t)})dtds\, ,
\end{align*}
where $\dot{A}(s)\,=\,\partial_{s}A(s)$. As both $A_{0}$ and $A(s)$ are flat
connections, it is easy to see that
\[
F^{\overline{f}(s,t)}\,=\,F^{(1-t)A_{0}+tA(s)}\,=\,(t^{2}-1)[A(t)-A_{0},A(t)-A_{0}]\, .
\]
Then, setting $\xi(s)\,=\,A(s)-A_{0}$, we have
\begin{align*}
\Lambda_{2}^{p}(f) & =\int_{0}^{1}t(t^{2}-1)^{r-2}dt\int_{0}^{1}p(\xi
(s),\dot{\xi}(s),[\xi(s),\xi(s)],\overset{(r-2)}{\cdots},[\xi(s),\xi(s)])ds\\
& =\frac{(-1)^{r}}{2(r-1)}\int_{0}^{1}p(\xi(s),\dot{\xi}(s),[\xi
(s),\xi(s)],\overset{(r-2)}{\cdots},[\xi(s),\xi(s)])ds\\
& =\frac{1}{2(r-1)}\int_{0}^{1}p(\xi(s),\dot{\xi}(s),\nabla^{A_{0}}
\xi(s),\overset{(r-2)}{\cdots},\nabla^{A_{0}}\xi(s))ds\, ,
\end{align*}
where $\dot{\xi}$ stands for $d\xi/ds$; for the last step we have taken
into account that $F^{A_{0}+\xi(s)}\,=\,\nabla^{A_{0}}\xi(s)+[\xi(s),\xi(s)]\,=\,0$.
In particular, this expression has a simpler form when the bundle $P $ is
trivial, $A_{0}$ is chosen to be the trivial connection and connections are
seen as $1$-forms in $M$ taking values in $\mathfrak{g}$, that is
\begin{equation}\label{laas}
\Lambda_{2}^{p}(f)\,=\,\frac{1}{2(r-1)}\int_{0}^{1}p(A(s),\dot{A}
(s),[A(s),A(s)],\overset{(r-2)}{\cdots},[A(s),A(s)])ds\, .
\end{equation}
If $A(s)=(\Phi(s))_\mathcal{A}(A_0)$, for a loop $\Phi(s) \subset \mathrm{Gau}P$, $0\leq s \leq 1$, the definition of degree of the loop is
\[
\int _M \int _0 ^1 p(A(s),\dot{A}(s))ds,
\]
for a metric $p$ (that is, $r=2$) in $\mathfrak{g}$ defining integer cohomology class (see, for example, \cite{Sal}).
We can understand the expression \eqref{laas} as a higher order generalization of the degree. Of course, the integral class vanishes when considered in $[S^1,\mathcal{F}/\mathrm{Gau}P]_{\mathrm{lift}}$. The expression for other loops in $A(s)$ in $\mathcal{F}$ is to be analyzed.

\section*{Acknowledgements}

We thank the referee for very helpful comments.
The first author is supported by a J. C. Bose Fellowship.
The second author thanks the TATA Institufe of Fundamental Research for its hospitality during a visit in which a part of this work was developed. The second author was partially funded by MINECO (Spain) Project MTM2015-63612-P.

\end{document}